\theoremstyle{plain}
\newtheorem{theorem}{Theorem}
\newtheorem{corollary}{Corollary}
\newtheorem{proposition}{Proposition}
\newtheorem{observation}{Observation}
\theoremstyle{definition}
\newtheorem{definition}{Definition}
\newtheorem{notation}{Notation}
\newtheorem{example}{Example}
\theoremstyle{remark}
\newtheorem{remark}{Remark}
\numberwithin{equation}{section}
\newcommand{\abs}[1]{\left\vert#1\right\vert}
\newcommand{\A}{\mathfrak A}
\newcommand{\Text}[1]{\text{\textnormal{#1}}}
\newcommand{\Aut}{\Text{Aut}}
\begin{document}

\title{Computing the autotopy group of a Latin square by cycle structure}%
\author{Daniel Kotlar}%
\address{Computer Science Department, Tel-Hai College, Upper galilee, Israel}%
\email{dannykot@telhai.ac.il}%
\setlength{\parskip}{0.075in}
\setlength\parindent{0pt}

\begin{abstract}
An algorithm that uses the cycle structure of the rows, or the columns, of a Latin square to compute its autotopy group is introduced. As a result, a bound for the size of the autotopy group is obtained. This bound is used to show that the computation time for the autotopy group of Latin squares that have two rows or two columns that map from one to the other by a permutation which decomposes into a bounded number of disjoint cycles, is polynomial in the order $n$.
\end{abstract}
\maketitle
\section{Introduction}\label{sec1}
For any positive integer $n$, a \emph{Latin square} of order $n$ is an $n\times n$ array of numbers in $[n]=\{1,\ldots,n\}$, so that each row and each column of $L$ is a permutation of $[n]$. A Latin square is called \emph{reduced} if its first row and first column are equal to the identity permutation.
A \emph{line} in $L$ is either a row or a column.

Let $S_n$ be the symmetric group of permutations of $[n]$. An \emph{isotopism} is a triple $(\alpha,\beta,\gamma)\in S_n^3$ that acts on the set of Latin squares of order $n$ by permuting the set of rows of a Latin square by $\alpha$, permuting the set of columns by $\beta$, and permuting the symbols by $\gamma$.
An \emph{autotopism} of a Latin square $L$ is an isotopism $\Theta$ such that $\Theta(L)=L$. The \emph{autotopy group} of $L$, denoted $\A(L)$, is the group of autotopisms of $L$.
Two Latin squares are called \emph{isotopic} if there is an isotopism that transforms one to the other. If $L$ and $L'$ are isotopic, say $\Theta(L)=L'$, then their autotopy groups are related: $\A(L')=\Theta\A(L)\Theta^{-1}$. Since every Latin square is isotopic to a reduced Latin square, we can study the structure and size of autotopy groups of general Latin squares by exploring autotopy groups of reduced Latin squares. For further knowledge about isotopisms and autotopisms the reader is referred to \cite{Denes74, Janssen95, MckayMM07, MckayWan05, Sade68}, among many others.

Autotopy groups, as well as ways to compute them, have been the subject of many studies. McKay \cite{McKay, MckayMM07} introduced ``nauty'', an algorithm for computing the symmetry groups of a graph, and used it to compute the various symmetry group of a Latin square, including the autotopy group, by mapping the Latin square to a certain graph. For some otherwise tough classes of Latin squares, ``nauty'' can be accelerated by using vertex invariants such as the ``train'' introduced
by Wanless \cite{wan05}.

Although the proportion of Latin squares of order $n$ which have non-trivial autotopy group tends quickly to zero as $n$ grows (McKay and Wanless \cite{MckayWan05}), some special Latin squares of any order may have a large autotopy group (see Wanless \cite{wan05}). For example, if $L$ is the Cayley table of a group $G$ of order $n$ then
\begin{equation}\label{eq1:0}
    |\A(L)|=n^2|\Aut(G)|
\end{equation}
(as mentioned in \cite{Bailey82} and \cite{Bro2012} and shown in \cite{Sade68} and \cite{Sch30}). Browning, Stones and Wanless \cite{Bro2012} set the following general bound for the size of the autotopy group of a Latin square $L$ of order $n$:
\begin{equation}\label{eq1:1}
    |\A(L)|\leq n^2 \prod_{t=1}^{\lfloor\log_2n\rfloor}\left(n-2^{t-1}\right).
\end{equation}
Given a specific Latin square $L$, other bounds can be computed using easily computable features of $L$. For example, viewing the rows and columns of a Latin square as permutations in $S_n$, and assuming that $L$ has $k$ rows of one parity (even or odd) and $n-k$ rows of the opposite parity, it was shown in \cite{kot12} that
\begin{equation}\label{eq1:2}
    |\A(L)|\leq n(n-k)!k!.
\end{equation}
Any permutation in $\sigma\in S_n$ can be written as a product of disjoint cycles - the \emph{cycle representation} of $\sigma$. The lengths of these cycles define a partition of $n$ called the \emph{cycle structure} of $\sigma$. The cycle structures of permutations have been considered in the context of Latin squares in different aspects. Cavenagh, Greenhill and Wanless \cite{Cav08} considered the cycle structure of the permutation that transforms one row of a Latin square to another row.
Falc\'{o}n \cite{Fal09} and Stones, Vojt\v{e}chovsk\'{y} and Wanless \cite{Stones11} considered the cycle structure of the permutations $\alpha$, $\beta$ and $\gamma$ in a given isotopism $\Theta=(\alpha, \beta, \gamma)$, in order to derive information on Latin squares for which $\Theta$ is an autotopism.
Ga{\l}uszka \cite{galuszka2008codes} used the cycle structure of rows of Latin squares, viewed as Cayley tables of groupoids, in order to study the quasigroup structure of the groupoids. In \cite{kot12} the author used the cycle structure of the rows of a reduced Latin square to obtain a bound on the size of the autotopy group:
let $L$ be a reduced Latin square and suppose $(\lambda_{1},\lambda_{2},\ldots,\lambda_{s})$ is a partition of $n$ by the different cycle structures of the rows, then
\begin{equation}\label{eq1:3}
\abs{\A(L)}\leq n^2\prod_{i=1}^s\lambda_{i}!.
\end{equation}

In this paper we introduce an algorithm that uses the cycle structure of the rows of $L$ for finding $\A(L)$. The algorithm yields a bound for $|\A(L)|$ that involves cycle structure data. It is shown that for the family of reduced Latin squares that have two rows or two columns that map from one to the other by a permutation having a bounded number of disjoint cycles, the autotopy group can be computed in polynomial time in $n$.

\begin{notation}\label{not0:1}
For a Latin square $L$ of order $n$ let $\{\sigma_i\}_{i=1}^n$ be the rows of $L$, viewed as permutations, and let $\{\pi_i\}_{i=1}^n$ be the columns of $L$.
\end{notation}

\textbf{Convention:} When viewing a row or a column of a Latin square as a permutation $\sigma\in S_n$, it is understood that the number $i$ appearing in the $j$th place of $\sigma$ signifies that $\sigma(j)=i$.

\section{Preliminary Results and Notation}
In this section we mention useful known results and introduce some notation.
\begin{notation}\label{not1:1}
Let $L$ be a reduced Latin square. For any permutation $\alpha\in S_n$ and any column $\pi_j$ of $L$ let $\Theta_{\alpha,j}$ denote the isotopism $(\alpha, \alpha\pi_j^{-1}\sigma_{\alpha^{-1}(1)}, \alpha\pi_j^{-1})$.
\end{notation}
Proposition~\ref{prop1:1} appears in different formulations in \cite{StWan12} and \cite{kot12}. It describes the $n\cdot n!$ isotopisms that map a given reduced Latin square to a reduced Latin square.
\begin{proposition}\label{prop1:1}
Let $L$ be a reduced Latin square.
\begin{enumerate}
  \item [\Text{(i)}]
  For any permutation $\alpha\in S_n$ and any column $\pi_j$ of $L$, the Latin square $\Theta_{\alpha,j}(L)$ is reduced.
  \item [\Text{(ii)}]
  If $\Theta$ is an isotopism such that $\Theta(L)$ is reduced then $\Theta=\Theta_{\alpha,j}$ for some $\alpha\in S_n$ and some column $\pi_j$ of $L$.
\end{enumerate}
\end{proposition}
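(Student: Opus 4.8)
The plan is to work with $L$ as a set of ordered triples $\{(r,c,s):L(r,c)=s\}$. By the stated convention, $L(r,c)=s$ is the same as $\sigma_r(c)=s$ and as $\pi_c(r)=s$, so in particular $\sigma_r(1)=r$, $\pi_c(1)=c$, and --- the identity I will lean on throughout --- $\pi_j(a)=\sigma_a(j)=L(a,j)$ for all $a,j$. In this language an isotopism $\Theta=(\alpha,\beta,\gamma)$ acts by $\Theta(L)=\{(\alpha(r),\beta(c),\gamma(s)):(r,c,s)\in L\}$.

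The first step is to restate ``$\Theta(L)$ is reduced'' as a pair of functional equations in $(\alpha,\beta,\gamma)$. Put $a=\alpha^{-1}(1)$. The first row of $\Theta(L)$ is the identity precisely when, for every $c$, the image $(1,\beta(c),\gamma(\sigma_a(c)))$ of the triple $(a,c,\sigma_a(c))$ satisfies $\beta(c)=\gamma(\sigma_a(c))$; that is, precisely when $\beta=\gamma\sigma_a$. Likewise, writing $c_0=\beta^{-1}(1)$, the first column of $\Theta(L)$ is the identity precisely when $\gamma\pi_{c_0}=\alpha$, i.e. $\gamma=\alpha\pi_{c_0}^{-1}$. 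Hence $\Theta(L)$ is reduced if and only if
\[
\beta=\gamma\,\sigma_{\alpha^{-1}(1)}\qquad\text{and}\qquad\gamma=\alpha\,\pi_{\beta^{-1}(1)}^{-1}.
\]

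Both parts then follow quickly from this reformulation. For (i) I would take $\Theta_{\alpha,j}=(\alpha,\alpha\pi_j^{-1}\sigma_a,\alpha\pi_j^{-1})$ and check the two equations: the first holds by construction, and for the second it is enough to verify that $\beta^{-1}(1)=j$ --- this is exactly where $\pi_j(a)=\sigma_a(j)$ is used, since $\beta(c)=1$ unwinds to $\sigma_a(c)=\pi_j(a)=\sigma_a(j)$, forcing $c=j$ --- after which $\gamma=\alpha\pi_{\beta^{-1}(1)}^{-1}=\alpha\pi_j^{-1}$ again holds by construction. For (ii), given $\Theta=(\alpha,\beta,\gamma)$ with $\Theta(L)$ reduced, set $j=\beta^{-1}(1)$; the second equation gives $\gamma=\alpha\pi_j^{-1}$, and feeding this into the first gives $\beta=\alpha\pi_j^{-1}\sigma_{\alpha^{-1}(1)}$, so $\Theta=\Theta_{\alpha,j}$. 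I expect the only real obstacle to be this first step: keeping the row/column/symbol actions and their inverses straight so that ``reduced'' comes out in precisely the form above; once that is settled and the identity $\pi_j(a)=\sigma_a(j)$ is in hand, the remainder is routine substitution.
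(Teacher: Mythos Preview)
Your argument is correct: the two functional equations $\beta=\gamma\sigma_{\alpha^{-1}(1)}$ and $\gamma=\alpha\pi_{\beta^{-1}(1)}^{-1}$ are exactly the conditions for $\Theta(L)$ to be reduced, and both parts follow from them by the substitutions you describe, with the key identity $\pi_j(a)=\sigma_a(j)$ doing the work in part~(i). There is nothing to compare against here, since the paper does not supply its own proof of this proposition --- it is stated with a citation to \cite{StWan12} and \cite{kot12} and used as a preliminary fact --- so your write-up stands on its own as a clean, self-contained verification.
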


The next proposition from \cite{kot12} describes the effect of applying $\Theta_{\alpha,j}$ on a single row of a Latin square (the slight difference from the original in \cite{kot12} is due to the different convention used there to interpret a line in a Latin square as a permutation).

\begin{proposition}\label{prop1:2}
Let $L$ be a reduced Latin square of order $n$ with row permutations $\{\sigma_i\}_{i=1}^n$ and column permutations $\{\pi_j\}_{j=1}^n$. Let $\Theta_{\alpha,j}$ be an isotopism as defined in Notation~\ref{not1:1}. Let $L'=\Theta_{\alpha,j}(L)$ and let $\{\sigma'_i\}_{i=1}^n$ be the rows of $L'$. Then, for all $i=1,\ldots,n$,
\begin{equation}\label{eq6:01}
\sigma'_i=\alpha\pi_j^{-1}\sigma_{\alpha^{-1}(i)}\sigma_{\alpha^{-1}(1)}^{-1}\pi_j\alpha^{-1}.
\end{equation}
In particular, if $\Theta_{\alpha,j}\in\A(L)$, then for all $i=1,\ldots,n$,
\begin{equation}\label{eq6:02}
\sigma_i=\alpha\pi_j^{-1}\sigma_{\alpha^{-1}(i)}\sigma_{\alpha^{-1}(1)}^{-1}\pi_j\alpha^{-1}.
\end{equation}
\end{proposition}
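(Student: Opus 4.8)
The plan is to reduce the proposition to a single bookkeeping computation: first record how an \emph{arbitrary} isotopism transforms the row permutations of $L$, and then substitute the specific second and third coordinates of $\Theta_{\alpha,j}$ from Notation~\ref{not1:1}.

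Under the action described in Section~\ref{sec1}, together with the stated convention for reading a line as a permutation, $L$ is the set of triples $\{(i,j,\sigma_i(j)):i,j\in[n]\}$, the coordinates being row, column and symbol. An isotopism $(\alpha,\beta,\gamma)$ sends such a triple to $(\alpha(i),\beta(j),\gamma(\sigma_i(j)))$, so in $L'=(\alpha,\beta,\gamma)(L)$ the symbol in row $\alpha(i)$ and column $\beta(j)$ is $\gamma(\sigma_i(j))$. Writing $\sigma'_k$ for the $k$th row of $L'$, setting $k=\alpha(i)$ and replacing $j$ by $\beta^{-1}(j)$, this says $\sigma'_k(j)=\gamma\big(\sigma_{\alpha^{-1}(k)}(\beta^{-1}(j))\big)$ for every $j$, i.e.
\[
\sigma'_k=\gamma\,\sigma_{\alpha^{-1}(k)}\,\beta^{-1},\qquad k=1,\ldots,n.
\]
Specializing to $\Theta_{\alpha,j}=(\alpha,\ \alpha\pi_j^{-1}\sigma_{\alpha^{-1}(1)},\ \alpha\pi_j^{-1})$ we have $\gamma=\alpha\pi_j^{-1}$ and $\beta^{-1}=\sigma_{\alpha^{-1}(1)}^{-1}\pi_j\alpha^{-1}$, and substituting these into the displayed identity yields exactly \eqref{eq6:01}. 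The ``in particular'' clause is then immediate: if $\Theta_{\alpha,j}\in\A(L)$ then $\Theta_{\alpha,j}(L)=L$ by definition of an autotopism, so $\sigma'_i=\sigma_i$ for all $i$ and \eqref{eq6:01} becomes \eqref{eq6:02}.

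No step here is conceptually hard; the only thing to watch is the order of composition and the placement of inverses, which is precisely where this statement differs slightly from the corresponding one in \cite{kot12} (which uses the opposite convention for reading a line as a permutation). As consistency checks I would verify the degenerate case $\alpha=\id$, $j=1$, where $\pi_1=\sigma_1=\id$ forces $\Theta_{\id,1}=(\id,\id,\id)$ and \eqref{eq6:01} correctly returns $\sigma'_i=\sigma_i$, and also confirm that \eqref{eq6:01} gives $\sigma'_1=\alpha\pi_j^{-1}\sigma_{\alpha^{-1}(1)}\sigma_{\alpha^{-1}(1)}^{-1}\pi_j\alpha^{-1}=\id$, in agreement with $\Theta_{\alpha,j}(L)$ being reduced (Proposition~\ref{prop1:1}(i)).
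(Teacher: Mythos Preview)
Your argument is correct: the general formula $\sigma'_k=\gamma\,\sigma_{\alpha^{-1}(k)}\,\beta^{-1}$ for the row permutations of $(\alpha,\beta,\gamma)(L)$ follows directly from the paper's convention $L(i,j)=\sigma_i(j)$, and specializing to $\beta=\alpha\pi_j^{-1}\sigma_{\alpha^{-1}(1)}$, $\gamma=\alpha\pi_j^{-1}$ gives \eqref{eq6:01} verbatim; the sanity checks you include are apt. Note that the paper does not actually prove this proposition---it is quoted from \cite{kot12} with a remark about the convention change---so there is no in-paper argument to compare against; your direct computation is exactly the natural one. One small presentational point: you use $j$ both as the running column index in the triples $(i,j,\sigma_i(j))$ and as the fixed index in $\Theta_{\alpha,j}$, which is harmless mathematically but worth renaming to avoid confusion.
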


\begin{notation}\label{not1:5}
For a permutation $\sigma\in S_n$ let $\nu(\sigma)$ denote the number of cycles in the cycle representation of $\sigma$ (including cycles of length 1). For a Latin square $L$ of order $n$ let $\nu(L)=\min_i \nu(\sigma_i)$.
\end{notation}

\begin{notation}\label{not2}
Let $L$ be a reduced Latin square of order $n$. For any $k\in[n]$ let $\lambda(L,k)$ denote the number of rows with the same cycle structure as the row $\sigma_k$. Let $\lambda(L)=\max_k\lambda(L,k)$.
\end{notation}

\begin{example}\label{ex0:1}
Consider the following reduced Latin square $L$ of order 8:
\begin{equation*}
\begin{array}{|c|c|c|c|c|c|c|c|}
\hline
 1 & 2 & 3 & 4 & 5 & 6 & 7 & 8\\
\hline
 2 & 1 & 4 & 6 & 8 & 7 & 5 & 3\\
\hline
 3 & 4 & 1 & 2 & 6 & 5 & 8 & 7\\
\hline
 4 & 5 & 8 & 7 & 3 & 2 & 1 & 6\\
\hline
 5 & 7 & 6 & 1 & 4 & 8 & 3 & 2\\
\hline
 6 & 8 & 5 & 3 & 7 & 1 & 2 & 4\\
\hline
 7 & 6 & 2 & 8 & 1 & 3 & 4 & 5\\
\hline
 8 & 3 & 7 & 5 & 2 & 4 & 6 & 1\\
\hline
\end{array}
\end{equation*}
The cycle representations of the rows of $L$, grouped by cycle structure, are:
\begin{equation}\label{eq:cs1}
\begin{array}{c l}
\Text{Row} & \Text{Cycle representation}\\
\hline
\hline
1   &   (1)(2)(3)(4)(5)(6)(7)(8)\\
\hline
3	&   (1,3)(2,4)(5,6)(7,8)\\
\hline
2	&   (1,2)(3,4,6,7,5,8)\\
6	&   (1,6)(2,8,4,3,5,7)\\
8	&   (1,8)(2,3,7,6,4,5)\\
\hline
4	&   (1,4,7)(2,5,3,8,6)\\
5	&   (1,5,4)(2,7,3,6,8)\\
7	&   (2,6,3)(1,7,4,8,5)\\
\end{array}
\end{equation}
We have $\nu(L)=2$. Also, $\lambda(L,1)=\lambda(L,3)=1$ and $\lambda(L,t)=3$ for $t\ne 1,3$. Thus, $\lambda(L)=3$.
\end{example}

\begin{notation}\label{not3}
Let $L$ be a reduced Latin square of order $n$ with row permutations $\{\sigma_i\}_{i=1}^n$. For any $i,k\in[n]$ denote by $\sigma_{i,k}$ the permutation $\sigma_i\sigma_k^{-1}$. Denote by $\Delta(L)$ the set of integers $k\in[n]$ such that the multiset of cycle structures of $\{\sigma_{i,k}\}_{i=1}^n$ is the same as the multiset of cycle structures of $\{\sigma_i\}_{i=1}^n$. Let $\delta(L)=|\Delta(L)|$.
\end{notation}

\begin{example}
If $L$ is the Cayley table of a finite group of order $n$, then $\{\sigma_{i,k}\}_{i=1}^n=\{\sigma_i\}_{i=1}^n$ for all $k\in[n]$. Thus, $\delta(L)=n$.
\end{example}

\begin{observation}\label{obs0}
Let $L$ be a reduced Latin square of order $n$ and let $\Theta_{\alpha,j}\in \A(L)$. If $k$ is in the $\alpha$-orbit of 1, then $k\in \Delta(L)$.
\end{observation}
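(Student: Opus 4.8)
The plan is to combine the group structure of $\A(L)$ with Propositions~\ref{prop1:1} and~\ref{prop1:2}. Applying relation~\eqref{eq6:02} to $\Theta_{\alpha,j}$ itself only yields that $\alpha^{-1}(1)\in\Delta(L)$; to reach every point of the $\alpha$-orbit of $1$ I would iterate, passing to the powers of $\Theta_{\alpha,j}$.

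First I would observe that for every positive integer $t$ we have $\Theta_{\alpha,j}^{\,t}\in\A(L)$, so $\Theta_{\alpha,j}^{\,t}(L)=L$ is reduced. By Proposition~\ref{prop1:1}(ii) there are a permutation $\beta\in S_n$ and a column $\pi_{j_t}$ of $L$ with $\Theta_{\alpha,j}^{\,t}=\Theta_{\beta,j_t}$; since isotopisms compose coordinatewise and the first coordinate of $\Theta_{\alpha,j}$ is $\alpha$, comparing first coordinates forces $\beta=\alpha^t$. Hence $\Theta_{\alpha^t,j_t}\in\A(L)$ for a suitable column index $j_t$ depending on $t$.

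Next I would apply~\eqref{eq6:02} to $\Theta_{\alpha^t,j_t}$, replacing $\alpha$ by $\alpha^t$ and $\pi_j$ by $\pi_{j_t}$, to get, for all $i\in[n]$,
\[
\sigma_i=\alpha^t\pi_{j_t}^{-1}\,\sigma_{\alpha^{-t}(i)}\sigma_{\alpha^{-t}(1)}^{-1}\,\pi_{j_t}\alpha^{-t}
       =\alpha^t\pi_{j_t}^{-1}\,\sigma_{\alpha^{-t}(i),\,\alpha^{-t}(1)}\,\pi_{j_t}\alpha^{-t}.
\]
Thus $\sigma_i$ is conjugate to $\sigma_{\alpha^{-t}(i),\,k}$ with $k:=\alpha^{-t}(1)$, so these two permutations have the same cycle structure. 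Since $i\mapsto\alpha^{-t}(i)$ is a bijection of $[n]$, letting $i$ run over $[n]$ shows that the multiset of cycle structures of $\{\sigma_i\}_{i=1}^n$ coincides with that of $\{\sigma_{m,k}\}_{m=1}^n$; by Notation~\ref{not3} this is exactly the statement $\alpha^{-t}(1)\in\Delta(L)$.

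Finally, as $t$ ranges over the positive integers the points $\alpha^{-t}(1)$ exhaust the $\alpha$-orbit of $1$ (the orbit is a finite cycle of $\alpha$), which gives the claim. The only mildly delicate point is the identification $\Theta_{\alpha,j}^{\,t}=\Theta_{\alpha^t,j_t}$, i.e.\ that iterating an autotopism of this canonical shape produces again one of canonical shape, now with first coordinate $\alpha^t$ and a generally different column index $j_t$; this is precisely what Proposition~\ref{prop1:1} provides.
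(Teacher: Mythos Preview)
Your proof is correct and follows essentially the same approach as the paper: pass to powers $\Theta_{\alpha,j}^{\,t}\in\A(L)$, use Proposition~\ref{prop1:1} to recognise these as $\Theta_{\alpha^t,j_t}$, and then apply~\eqref{eq6:02} to conclude $\alpha^{-t}(1)\in\Delta(L)$. The only cosmetic difference is that the paper fixes $k$ in the orbit first and picks the corresponding $t$, whereas you iterate over $t$ and note that the resulting points $\alpha^{-t}(1)$ exhaust the orbit.
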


\begin{proof}
Suppose $\alpha^t(k)=1$ for some $t$. Since $\A(L)$ is a group, $\Theta_{\alpha,j}^t\in \A(L)$. Note that $\Theta_{\alpha,j}^t=\Theta_{\alpha^t,m}$ for some $m\in[n]$, by Proposition~\ref{prop1:1}. Let $\beta=\alpha^t$. We have $\beta^{-1}(1)=k$. Since $\Theta_{\beta,m}\in \A(L)$ we have $\sigma_i=\beta\pi_m^{-1}\sigma_{\beta^{-1}(i)}\sigma_k^{-1}\pi_m\beta^{-1}$ for all $i=1,\ldots,n$, by (\ref{eq6:02}). Thus, $k\in \Delta(L)$.
\end{proof}

\begin{notation}\label{not4}
Let $L$ be a reduced Latin square of order $n$ and suppose $k\in \Delta(L)$. For any $t\in[n]$ let
\begin{equation*}
    R_k(L,t):=\{i\in[n]: \sigma_t \Text{ and } \sigma_{i,k}\Text{ have the same cycle structure}\}.
\end{equation*}
\end{notation}
The following two observations follow directly from (\ref{eq6:02}):
\begin{observation}\label{obs1}
Let $L$ be a reduced Latin square of order $n$. If $k\in \Delta(L)$, then $R_k(L,1)=\{k\}$ and $|R_k(L,t)|=\lambda(L,t)$ for all $t\in[n]$.
\end{observation}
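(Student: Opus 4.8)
The plan is to unwind the definitions of $\Delta(L)$, $R_k(L,t)$ and $\lambda(L,t)$, and then use the hypothesis $k\in\Delta(L)$ only to equate multiplicities of cycle structures in two multisets of size $n$.

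First I would handle $R_k(L,1)=\{k\}$, which does not actually require $k\in\Delta(L)$. Since $L$ is reduced, its first row $\sigma_1$ is the identity permutation $\id$, whose cycle structure is the all-ones partition of $n$; the only permutation of $[n]$ with that cycle structure is $\id$ itself. Hence $i\in R_k(L,1)$ forces $\sigma_{i,k}=\sigma_i\sigma_k^{-1}=\id$, i.e. $\sigma_i=\sigma_k$, and since distinct rows of a Latin square are distinct permutations this gives $i=k$. Conversely $k\in R_k(L,1)$ because $\sigma_k\sigma_k^{-1}=\id$ shares the cycle structure of $\sigma_1$. Thus $R_k(L,1)=\{k\}$.

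For $|R_k(L,t)|=\lambda(L,t)$, write $[\tau]$ for the cycle structure of a permutation $\tau$. By Notation~\ref{not4}, $|R_k(L,t)|$ is exactly the number of indices $i\in[n]$ with $[\sigma_{i,k}]=[\sigma_t]$, i.e. the multiplicity of $[\sigma_t]$ in the multiset $\{[\sigma_{i,k}]\}_{i=1}^n$; by Notation~\ref{not2}, $\lambda(L,t)$ is the multiplicity of $[\sigma_t]$ in the multiset $\{[\sigma_i]\}_{i=1}^n$. Since $k\in\Delta(L)$, these two multisets coincide by Notation~\ref{not3}, so every cycle structure — in particular $[\sigma_t]$ — occurs with the same multiplicity in both. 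This is precisely $|R_k(L,t)|=\lambda(L,t)$.

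There is no substantial obstacle: the statement is essentially a direct consequence of the definitions, with Proposition~\ref{prop1:2} lurking behind the relevance of $\Delta(L)$ (and explaining why the text attributes the observation to~(\ref{eq6:02})). The only point needing a moment's care is the first assertion, where one must invoke reducedness of $L$ to get $\sigma_1=\id$ together with distinctness of the rows to pin down $R_k(L,1)$ exactly.
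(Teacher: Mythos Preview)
Your proof is correct and matches what the paper intends: the paper gives no explicit argument beyond declaring that the observation ``follow[s] directly from~(\ref{eq6:02}),'' and your unwinding of the definitions of $\Delta(L)$, $R_k(L,t)$ and $\lambda(L,t)$ is exactly the content behind that remark. You are also right that the attribution to~(\ref{eq6:02}) is somewhat loose --- the statement itself is purely a matter of matching multiset multiplicities once $k\in\Delta(L)$, with~(\ref{eq6:02}) only motivating why one cares about these quantities.
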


\begin{observation}\label{obs2}
Let $L$ be a reduced Latin square of order $n$ and let $\Theta_{\alpha,j}\in \A(L)$. If $\alpha(k)=1$, then for any $t\in[n]$,
\begin{equation}\label{eq2:04}
    \alpha^{-1}(t)\in R_k(L,t).
\end{equation}
\end{observation}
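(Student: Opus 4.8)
The plan is to unwind the definitions and apply the autotopism identity (\ref{eq6:02}) directly; this is one of the ``observations that follow directly from (\ref{eq6:02})'' promised in the text, so the argument is short. First I would note that the hypothesis $\alpha(k)=1$ is equivalent to $\alpha^{-1}(1)=k$, so $k$ lies in the $\alpha$-orbit of $1$. By Observation~\ref{obs0} this gives $k\in\Delta(L)$, which guarantees that $R_k(L,t)$ is defined for every $t\in[n]$ (recall Notation~\ref{not4} presupposes $k\in\Delta(L)$). This well-definedness check is the only point that needs a moment's attention; everything else is a routine substitution.

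Next, since $\Theta_{\alpha,j}\in\A(L)$ and $\alpha^{-1}(1)=k$, evaluating (\ref{eq6:02}) at the index $i=t$ yields
\begin{equation*}
\sigma_t=\alpha\pi_j^{-1}\sigma_{\alpha^{-1}(t)}\sigma_k^{-1}\pi_j\alpha^{-1}=\bigl(\alpha\pi_j^{-1}\bigr)\,\sigma_{\alpha^{-1}(t),k}\,\bigl(\alpha\pi_j^{-1}\bigr)^{-1},
\end{equation*}
where I have used the abbreviation $\sigma_{i,k}=\sigma_i\sigma_k^{-1}$ from Notation~\ref{not3}. Hence $\sigma_t$ is conjugate to $\sigma_{\alpha^{-1}(t),k}$ (the conjugating element being $\alpha\pi_j^{-1}$), and conjugate permutations have the same cycle structure, so $\sigma_t$ and $\sigma_{\alpha^{-1}(t),k}$ have identical cycle structure.

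Finally, having the same cycle structure as $\sigma_t$ is precisely the condition defining membership of an index in $R_k(L,t)$ (Notation~\ref{not4}), so $\alpha^{-1}(t)\in R_k(L,t)$, which is the assertion of (\ref{eq2:04}). I expect no real obstacle in carrying this out; the argument is essentially a one-line consequence of Proposition~\ref{prop1:2} once Observation~\ref{obs0} has been invoked to ensure $k\in\Delta(L)$.
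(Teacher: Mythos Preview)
Your proposal is correct and matches the paper's intent: the paper gives no explicit proof but simply states that the observation follows directly from (\ref{eq6:02}), and your substitution $i=t$ followed by recognizing the conjugacy $\sigma_t=(\alpha\pi_j^{-1})\sigma_{\alpha^{-1}(t),k}(\alpha\pi_j^{-1})^{-1}$ is exactly that direct deduction. Your preliminary invocation of Observation~\ref{obs0} to ensure $k\in\Delta(L)$ (so that $R_k(L,t)$ is defined) is a careful touch the paper leaves implicit.
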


\begin{remark}
Since there are $n$ options for $\pi_j$ in Proposition~\ref{prop1:1} and $\delta(L)\le n$ the bound in (\ref{eq1:3}) follows.
\end{remark}

\begin{example}\label{ex1}
The Latin square $L$ in Example~\ref{ex0:1} has $\Delta(L)=\{1,2\}$. The cycle representations of $\{\sigma_{i,2}\}_{i=1}^8$ are:

\begin{equation}\label{eq:cs2}
\begin{array}{c l}
i & \Text{Cycle rep. of }\sigma_{i,2} \\
\hline
\hline
2   &   (1)(2)(3)(4)(5)(6)(7)(8)\\
\hline
8   &   (1,3)(2,8)(4,7)(5,6)\\
\hline
1	 &   (1,2)(3,8,5,7,6,4)\\
3	 &   (1,4)(2,3,7,5,8,6)\\
4	 &   (1,5)(2,4,8,3,6,7)\\
\hline
5   &   (2,5,3)(1,7,8,4,6)\\
6   &   (1,8,7)(2,6,3,4,5)\\
7   &   (1,6,8)(2,7,3,5,4)\\
\end{array}
\end{equation}

We have $R_2(L,1)=\{2\}$, $R_2(L,3)=\{8\}$, $R_2(L,2)=R_2(L,6)=R_2(L,8)=\{1,3,4\}$, and $R_2(L,4)=R_2(L,5)=R_2(L,7)=\{5,6,7\}$.
\end{example}

\section{Computing $\A(L)$}\label{sec2}

Let $L$ be a reduced Latin square and let $\sigma_l$ be a row permutation in $L$ with minimal number of cycles. We shall construct isotopisms $\Theta_{\alpha,j}$ that fix $\sigma_l$. By (\ref{eq6:02}) we have,
\begin{equation}\label{eq3:1}
\sigma_l=\alpha\pi_j^{-1}\sigma_{\alpha^{-1}(l)}\sigma_{\alpha^{-1}(1)}^{-1}\pi_j\alpha^{-1}.
\end{equation}
The construction of $\Theta_{\alpha,j}\in\A(L)$ goes as follows: we first choose $\alpha^{-1}(1)$ and $\alpha^{-1}(l)$. Then choose $j$ and compute $\pi_j^{-1}\sigma_{\alpha^{-1}(l)}\sigma_{\alpha^{-1}(1)}^{-1}\pi_j$. Then we use Equation (\ref{eq3:1}) to determine $\alpha$, and thus we have a candidate $\Theta_{\alpha,j}$. Finally, we check whether $\Theta_{\alpha,j}$ fixes the other rows. Due to the different constraints, most $\Theta_{\alpha,j}$'s will be eliminated along the way and only a small fraction will reach the final stage.

Here is a detailed description of the algorithm:

\textbf{Step 1:}
As already mentioned, We choose a row $\sigma_l$ in $L$ with a minimal number of cycles, i.e. $\nu(L,l)=\nu(L)$. The index $l$ will remain constant throughout the algorithm.

\textbf{Step 2:} We next wish to choose $k=\alpha^{-1}(1)$. By Observation~\ref{obs0}, we must choose $k$ from $\Delta(L)$. We have
  \begin{equation}\label{eq3:2}
    \alpha(k)=1.
 \end{equation}

We define the binary matrix $T(L,k)$ as the $n\times n$ matrix whose $(i,j)$ entry is 1 if and only if $\sigma_{i,k}$ has the same cycle structure as $\sigma_j$. This means that there might be an autotopism $(\alpha,\beta,\gamma)$ of $L$ such that $\alpha(i)=j$, by Observation~\ref{obs2}. Any all-1's generalized diagonal in $T(L,k)$ corresponds to a possible such permutation $\alpha$. (The term \emph{generalized diagonal} of an $n\times n$ matrix refers to a set of $n$ entries belonging to distinct rows and distinct columns.)

\begin{remark}\label{rem2}
If we group the rows $T_i$ of $T(L,k)$ by the cycle structures of $\{\sigma_{i,k}\}_{i=1}^n$ and group the columns $T^j$ of $T(L,k)$ by the cycle structures of $\{\sigma_{j}\}_{j=1}^n$ (as in (\ref{eq:cs1}) and (\ref{eq:cs2})), keeping the original row and column indexes, the resulting matrix consists of all-1's square blocks in the diagonal and 0's elsewhere. For example, the matrices $T(L,1)$ and $T(L,2)$ for the Latin square $L$ in Example~\ref{ex0:1}, after such row and column rearrangements (and omitting the 0's for clarity) are

\resizebox{\linewidth}{!}{
$\begin{array}{cc}
    &
    \begin{array}{cccccccc}{}^1 & {}^3 & {}^2 & {}^6 & {}^8 & {}^4 & {}^5 & {}^7
    \end{array}
  \\
  \begin{array}{r}  {}_1\\{}_3\\{}_2\\{}_6\\{}_8\\{}_4\\{}_5\\{}_7
  \end{array}
  & \left(\begin{array}{cccccccc}
  1& & & & & & &  \\
   &1& & & & & &  \\
   & &1&1&1& & &  \\
   & &1&1&1& & &  \\
   & &1&1&1& & &  \\
   & & & & &1&1&1 \\
   & & & & &1&1&1 \\
   & & & & &1&1&1
  \end{array}\right)
\end{array}
\quad\Text{and}\quad
\begin{array}{cc}
    &
    \begin{array}{cccccccc}{}^1 & {}^3 & {}^2 & {}^6 & {}^8 & {}^4 & {}^5 & {}^7
    \end{array}
  \\
  \begin{array}{r}  {}_2\\{}_8\\{}_1\\{}_3\\{}_4\\{}_5\\{}_6\\{}_7
  \end{array}
  & \left(\begin{array}{cccccccc}
  1& & & & & & &  \\
   &1& & & & & &  \\
   & &1&1&1& & &  \\
   & &1&1&1& & &  \\
   & &1&1&1& & &  \\
   & & & & &1&1&1 \\
   & & & & &1&1&1 \\
   & & & & &1&1&1
  \end{array}\right)
\end{array}$
}\\

respectively, where the indexes on the top and on the left represent the original row numbers.
\end{remark}

\textbf{Step 3:}
We next choose $i=\alpha^{-1}(l)$. By Observation~\ref{obs2}, we must choose $i$ from $R_k(L,l)$. We have
\begin{equation}\label{eq3:3}
    \alpha(i)=l.
 \end{equation}
We compute $\sigma_{i,k}$. By (\ref{eq3:2}) and (\ref{eq3:3}), equation (\ref{eq3:1}) can be rewritten as
\begin{equation}\label{eq3:4}
\sigma_l=\alpha\pi_j^{-1}\sigma_{i,k}\pi_j\alpha^{-1}.
\end{equation}

\textbf{Step 4:}
We next choose a column $\pi_j$ and compute $\sigma_{i,j,k} := \pi_j^{-1}\sigma_{i,k}\pi_j$. We have, by (\ref{eq3:4}),
\begin{equation}\label{eq3:5}
\sigma_l=\alpha\sigma_{i,j,k}\alpha^{-1}.
\end{equation}

\textbf{Step 5:}
We manipulate the matrix $T(L,k)$ in the following way:
\begin{enumerate}
  \item We rearrange the rows by the cycle representation of $\sigma_{i,j,k}$ while keeping the original indexes. That is, for each cycle of $\sigma_{i,j,k}$, the rows indexed by its elements will be adjacent and the order in which the rows will be arranged will be the same as the order of the corresponding indexes within the cycle.
  \item In a similar manner, we rearrange the columns by the cycle representation of $\sigma_{l}$ while keeping the original indexes.
  \item We switch all the 1's in the row indexed by $i$, except for the 1 in the column indexed by $l$ and all the 1's in the column indexed by $l$, except for the 1 in the row indexed by $i$ (by (\ref{eq3:3})). In the row indexed by 1 we switch all the 1's that are not in columns indexed by elements of $\Delta(L)$ (by Observation~\ref{obs0}). In the row indexed by $k$ we switch all the 1's, except for the 1 in the column indexed by $1$ and all the 1's in the column indexed by $1$, except for the 1 in the row indexed by $k$ (by (\ref{eq3:2})).
  \item We subdivide the matrix into blocks according to the cycles of $\sigma_{i,j,k}$ and $\sigma_{l}$ (that is, the rows of each block are indexed by the numbers in a cycle of $\sigma_{i,j,k}$ and the columns of each block are indexed by the numbers in a cycle of $\sigma_{l}$). Then, switch all the 1's that appear in non-square blocks.
\end{enumerate}
The resulting matrix will be denoted $T^l(L,i,j,k)$.

\textbf{Step 6:}
This step constitutes most of the work. We shall use the following definition and proposition:
\begin{definition}
A \emph{shifted diagonal} in a square matrix $A=(a_{ij})_{i,j=1}^{m}$ of order $m$ is a set of cells $\{a_{i,t+i}\}_{i=1}^{m}$, for some $t\in\{1,\ldots,m\}$, where the indexes $t+i$ are taken modulo $m$ plus 1. Here is an illustration of a shifted diagonal:
\begin{equation*}
\left(\begin{array}{cc}
 &
\begin{array}{cccc}
* &  &  &  \\
 & * &  &  \\
 & & \ddots & \\
 & & & *
 \end{array}
 \\
\begin{array}{ccc}
* &  &   \\
 & \ddots & \\
 & & *
 \end{array}
 &
 \end{array}
 \right)
\end{equation*}
We shall use the term \emph{block shifted diagonal} for a shifted diagonal inside a square block of a given matrix.
\end{definition}

\begin{proposition}\label{prop3:1}
For any $\Theta_{\alpha,j}\in \A(L)$ there exist $k\in \Delta(L)$ and $i\in R_k(L,l)$ so that $\alpha$ is defined by an all-1's generalized diagonal $D_\alpha$ of $T^l(L,i,j,k)$, which satisfies the following property:
\begin{enumerate}
  \item [(*)] For any block $M$ in the subdivision of $T^l(L,i,j,k)$ (described in Step 5(4) above), either $D_\alpha\cap M =\emptyset$ or $D_\alpha\cap M$ is a shifted diagonal of $M$.
\end{enumerate}
\end{proposition}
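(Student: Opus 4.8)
The plan is to produce, from a given $\Theta_{\alpha,j}\in\A(L)$, the data $k$, $i$ and $D_\alpha$ demanded by the statement, and then to verify that $D_\alpha$ survives the surgery performed in Step~5. I would set $k=\alpha^{-1}(1)$ and $i=\alpha^{-1}(l)$, and let $D_\alpha=\{(m,\alpha(m)):m\in[n]\}$ be the generalized diagonal of $\alpha$. That $k\in\Delta(L)$ follows from Observation~\ref{obs0} since $k$ lies in the $\alpha$-orbit of $1$, and $i\in R_k(L,l)$ follows from Observation~\ref{obs2} since $\alpha(k)=1$. The first point to record is that $D_\alpha$ is already an all-$1$'s generalized diagonal of the \emph{unmodified} matrix $T(L,k)$: substituting $\alpha^{-1}(1)=k$ into (\ref{eq6:02}) gives $\sigma_{\alpha(m)}=\alpha\pi_j^{-1}\sigma_{m,k}\pi_j\alpha^{-1}$, so $\sigma_{\alpha(m)}$ and $\sigma_{m,k}$ are conjugate and hence share a cycle structure, which is exactly the condition for the $(m,\alpha(m))$ entry of $T(L,k)$ to be $1$. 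Since rearranging rows and columns (Step~5(1)--(2)) leaves all entries unchanged, it then suffices to check that no cell of $D_\alpha$ is switched to $0$ in Steps~5(3) and~5(4).

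For Step~5(3) I would observe that each cell the step protects is precisely the cell of $D_\alpha$ lying in the corresponding line: $\alpha(i)=l$ makes $(i,l)$ the unique cell of $D_\alpha$ in row $i$ and in column $l$; $\alpha(k)=1$ makes $(k,1)$ the unique cell of $D_\alpha$ in row $k$ and in column $1$; and the cell of $D_\alpha$ in row $1$ is $(1,\alpha(1))$, which is not switched because $\alpha(1)$ belongs to the $\alpha$-orbit of $1$, hence to $\Delta(L)$ by Observation~\ref{obs0}. The possible coincidences among the indices $1,i,k,l$ cause no trouble; note in particular that for $n\ge 2$ one has $l\ne 1$ (because $\sigma_1=\id$ has the maximum number of cycles while $\sigma_l$ has the minimum), and therefore $i\ne k$.

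The real content is Step~5(4), and this is the step I expect to require the most care. Equation~(\ref{eq3:5}) reads $\sigma_l=\alpha\sigma_{i,j,k}\alpha^{-1}$, so conjugation by $\alpha$ sends each cycle of $\sigma_{i,j,k}$ onto a cycle of $\sigma_l$ of equal length: a cycle $C=(c_1,\ldots,c_r)$ of $\sigma_{i,j,k}$ maps to the cycle $(\alpha(c_1),\ldots,\alpha(c_r))$ of $\sigma_l$. Hence the cells $(c_1,\alpha(c_1)),\ldots,(c_r,\alpha(c_r))$ of $D_\alpha$ all lie in a single square block of the subdivision, namely the block $M_C$ whose rows are indexed by $C$ and whose columns are indexed by $\alpha(C)$; so no cell of $D_\alpha$ lies in a non-square block and Step~5(4) switches nothing on $D_\alpha$. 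This shows $D_\alpha$ is an all-$1$'s generalized diagonal of $T^l(L,i,j,k)$. Finally, to obtain property~(*), I would read off $D_\alpha\cap M_C$ inside $M_C$: by Step~5(1) the rows of $M_C$ are listed in the cyclic order $c_1,c_2,\ldots,c_r$ inherited from $\sigma_{i,j,k}$, while by Step~5(2) the columns of $M_C$ are listed in the cyclic order $\alpha(c_s),\alpha(c_{s+1}),\ldots,\alpha(c_{s+r-1})$ (indices mod $r$) inherited from whatever rotation of $(\alpha(c_1),\ldots,\alpha(c_r))$ the cycle representation of $\sigma_l$ happens to use; thus $(c_t,\alpha(c_t))$ occupies row-position $t$ and column-position $t-s+1\pmod r$, so $D_\alpha\cap M_C$ is exactly a block shifted diagonal of $M_C$. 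Every block not of the form $M_C$ meets $D_\alpha$ in the empty set, so (*) holds and the proof is complete.
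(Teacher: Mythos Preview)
Your proof is correct and follows essentially the same approach as the paper: both set $k=\alpha^{-1}(1)$, $i=\alpha^{-1}(l)$, use (\ref{eq3:5}) together with the conjugation identity $\alpha(a_1,\ldots,a_t)\alpha^{-1}=(\alpha(a_1),\ldots,\alpha(a_t))$ to see that $\alpha$ carries each cycle of $\sigma_{i,j,k}$ onto a cycle of $\sigma_l$, and then read off the shifted-diagonal structure from the cyclic row/column orderings. Your write-up is in fact more thorough than the paper's, since you explicitly verify that the deletions in Step~5(3) never touch a cell of $D_\alpha$, a point the paper's proof passes over in silence.
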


\begin{proof}
Since $\alpha$ is defined by a generalized diagonal of $T^l(L,k)$, it still corresponds to a generalized diagonal after the rearrangements described in Step 5(1) and Step 5(2).
By (\ref{eq3:5}) and the identity
 \begin{equation}\label{eq3:6}
    \alpha(a_1,a_2,\ldots,a_t)\alpha^{-1}=(\alpha(a_1),\alpha(a_2),\ldots,\alpha(a_t)),
 \end{equation}
for any number $s\in[n]$, $s$ and $\alpha(s)$ appear in cycles of the same size of $\sigma_{i,j,k}$ and $\sigma_l$, respectively. Thus, by the rearrangement described in Step 5(1) and Step 5(2) and the subdivision described in Step 5(4), the intersection of $D_\alpha$ with any non-square block must be empty, and thus we can switch the 1's in these blocks.

Let $C$ be a cycle in the cycle representation of $\sigma_{i,j,k}$. By (\ref{eq3:6}), all the numbers appearing in $C$ are mapped by $\alpha$ to a set of numbers that form a cycle $C'$ in the cycle representation of $\sigma_l$.
Let $M_{C,C'}$ be the block in the subdivision of $T^l(L,i,j,k)$ whose rows are indexed by the elements of $C$ and whose columns are indexed the elements of $C'$. Thus, $D_\alpha\cap M_{C,C'}$ is a generalized diagonal of $M$.
Suppose $t$ succeeds $s$ in the cycle $C$ (this means that either $t$ appears immediately to the right of $s$, or $s$ is the rightmost element in $C$ and $t$ is the leftmost one), then by (\ref{eq3:6}), $\alpha(t)$ succeeds $\alpha(s)$ in $C'$. Thus, the 1's in the entries $(s,\alpha(s))$ and $(t,\alpha(t))$ in $T^l(L,i,j,k)$ are positioned in the block $M_{C,C'}$ in one of the four ways described in Figure~\ref{fig1}. In any case, they are part of a shifted diagonal of $M$.

\begin{figure}[h!]
  \centering
  \subfigure[]{\label{fig1a}\includegraphics[scale=0.2]{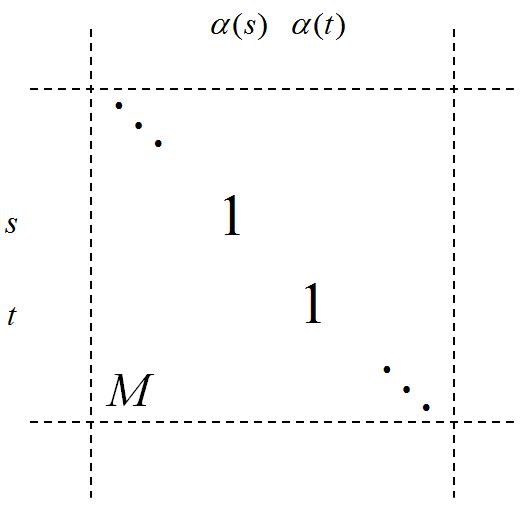}}
  \subfigure[]{\label{fig1b}\includegraphics[scale=0.2]{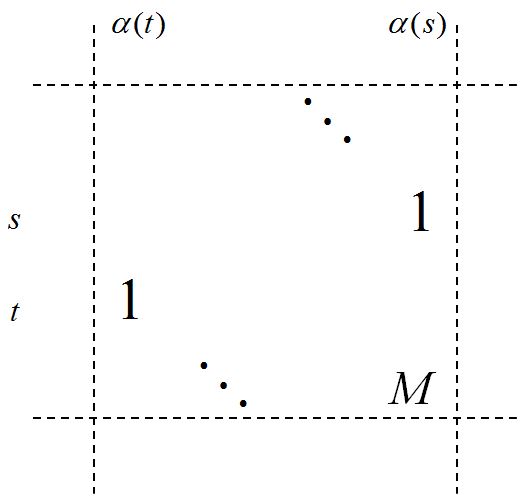}}
  \subfigure[]{\label{fig1c}\includegraphics[scale=0.2]{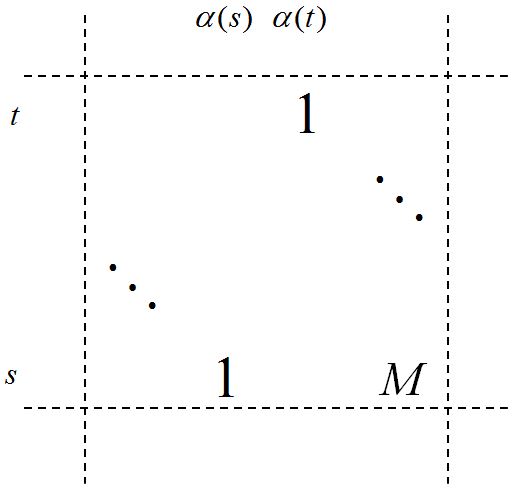}}
  \subfigure[]{\label{fig1d}\includegraphics[scale=0.2]{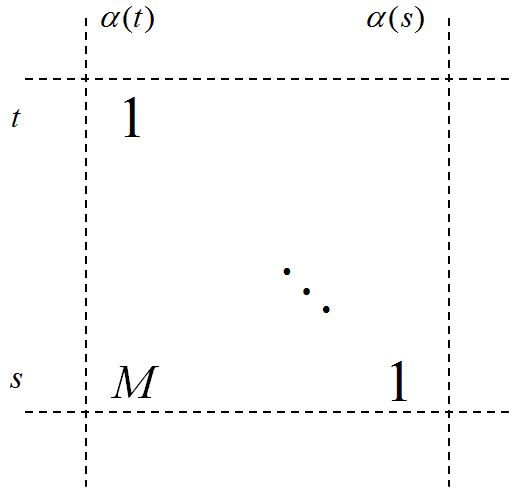}}
  \caption{ }
  \label{fig1}
\end{figure}

\end{proof}

The problem of finding $\A(L)$ is now reduced to finding the all-1's shifted diagonals in the square blocks of $T^l(L,i,j,k)$, for each  $i,j$ and $k$. In order to do this we don't have to iterate over all $\nu(L)\times \nu(L)$ blocks. Instead, we can iterate over all $\nu(L)$ cycles of $\sigma_l$.
For each such cycle $C$, we look at the columns indexed by its members, and consider the column with the least number of 1's. For each 1 in this column, we check whether it is part of an all-1's shifted diagonal in its block.
If in the columns indexed by some cycle of $\sigma_l$, there is not a single all-1's block shifted diagonal, then, clearly, there will be no all-1's generalized diagonal that satisfies the property (*) in $T^l(L,i,j,k)$. Hence, there is no need to check the rest of the cycles, and consequently, we can skip the rest of the steps in the algorithm and move on to the next matrix $T^l(L,i,j,k)$.

\textbf{Step 7:} After finding the all-1's block shifted diagonals in $T^l(L,i,j,k)$ (assuming that each cycle of $\sigma_l$ defines at least one) we combine them into generalized diagonals of $T^l(L,i,j,k)$. This will yield isotopisms $\Theta_{\alpha,j}$ that fix the row $\sigma_l$ (and, of course, the row $\sigma_1$, since all isotopisms of the form $\Theta_{\alpha,j}$ fix the first row).

\textbf{Step 8:} We use (\ref{eq6:01}) to check whether these $\Theta_{\alpha,j}$'s fix the other $n-2$ rows.

\begin{remark}
All the elements of $\A(L)$ constructed by this algorithm are distinct. To see this, first note that any two autotopisms $\Theta_{\alpha,j}$ and $\Theta_{\alpha',j}$ arising from the same matrix $T^l(L,i,j,k)$ are distinct, since they correspond to different diagonals of $T^l(L,i,j,k)$, so $\alpha\ne\alpha'$. Now, suppose $\Theta_{\alpha,j}$ and $\Theta_{\alpha',j'}$ arise from $T^l(L,i,j,k)$ and $T^l(L,i',j',k')$, respectively. If $j\ne j'$, then the autotopisms are clearly distinct. If either $i\ne i'$ or $k\ne k'$, then $\alpha\ne\alpha'$ by (\ref{eq3:2}) and (\ref{eq3:3}).
\end{remark}

\begin{remark}\label{rem4}
The algorithm was tested on 20,000 randomly generated Latin squares (by Jacobson Matthews method \cite{JM96}) of each of the orders 10, 15, 20, 25, and 30. The running times on an Intel Core i7 processor were 1.2, 2.1, 3.1, 4.3, and 5.7 seconds, respectively. The algorithm was also tested on an assortment of Latin squares with relatively small nontrivial autotopy group, produced by a program written for \cite{Wan04}. For 5000 Latin squares of each of the orders 10, 15, 20, 25, and 30, the times in seconds were 2.7, 3.7, 8.0, 15.4, and 23.4. It is worth noting the slow rate of growth as the order $n$ increases.
\end{remark}

\begin{example}\label{ex5}
Consider the Latin square in Example~\ref{ex0:1}.  We take $l=7$. Thus, the columns in any $T^7(L,i,j,k)$ are arranged by the cycles of $\sigma_7=(1,7,4,8,5)(2,6,3)$. We illustrate three different scenarios:
 \begin{enumerate}
   \item A candidate $\Theta_{\alpha,j}$  that is not an autotopism: Let $k=1$, $i=5$ and $j=6$. So, $\alpha(1)=1$ and $\alpha(5)=7$ (by (\ref{eq3:2}) and (\ref{eq3:3})). We have $\sigma_{5,1}=\sigma_{5}= (1,5,4)(2,7,3,6,8)$ (see (\ref{eq:cs1})) and $\sigma_{5,6,1}=\pi_6^{-1}\sigma_{5,1}\pi_6=(1,5,4,2,7)(3,8,6)$. After rearranging the rows and columns of $T(L,1)$ (in Remark~\ref{rem2}) by the cycles of $\sigma_{5,6,1}$ and $\sigma_7$ respectively, and deleting 1's according to Step 5(3) and Step 5(4), we obtain the following matrix $T^7(L,5,6,1)$:
       \begin{equation*}
\begin{array}{cc}
    &
    \begin{array}{cccccccc}{}^1 & {}^7 & {}^4 & {}^8 & {}^5 & {}^2 & {}^6 & {}^3
    \end{array}
  \\
  \begin{array}{r}  {}_1\\{}_5\\{}_4\\{}_2\\{}_7\\{}_3\\{}_8\\{}_6
  \end{array}
  & \left(\begin{array}{ccccc|ccc}
   \underline{1}& & & & & & &  \\
   &\underline{1}& & & & & &  \\
   & &\underline{1}& &1& & &  \\
   & & &\underline{1}& & & &  \\
   & &1& &\underline{1}& & &  \\
\hline
   & & & & & & &\underline{1} \\
   & & & & &\underline{1}&1&  \\
   & & & & &1&\underline{1}&
  \end{array}\right)
\end{array}
\end{equation*}
in which the only all 1's generalized diagonal satisfying the property (*) is underlined. This yields $\Theta_{\alpha,6}$ where  $\alpha=\left(\begin{array}{cccccccc}1&2&3&4&5&6&7&8\\1&8&3&4&7&6&5&2\end{array}\right)$. As explained above, $\Theta_{\alpha,6}$ was constructed to fix the first and seventh rows of $L$. However, $\Theta_{\alpha,6}\not\in\A(L)$ since it does not fix all the other rows.

   \item An autotopism: Let $k=2$, $i=5$ and $j=2$. So, $\alpha(2)=1$ and $\alpha(5)=7$. We have $\sigma_{5,2}=(2,5,3)(1,7,8,4,6)$ (see (\ref{eq:cs2})) and $\sigma_{5,2,2}=\pi_2^{-1}\sigma_{5,2}\pi_2=(1,4,8)(2,5,6,3,7)$. If we rearrange the rows of the matrix $T(L,2)$ (in Remark~\ref{rem2}) by the cycles in $\sigma_{5,2,2}$, and rearrange its columns by the cycles in $\sigma_7$ and delete 1's according to(3) and Step 5(4), we obtain the following matrix $T^7(L,5,2,2)$:
\begin{equation*}
\begin{array}{cc}
    &
    \begin{array}{cccccccc}{}^1 & {}^7 & {}^4 & {}^8 & {}^5 & {}^2 & {}^6 & {}^3
    \end{array}
  \\
  \begin{array}{r}  {}_2\\{}_5\\{}_6\\{}_3\\{}_7\\{}_1\\{}_4\\{}_8
  \end{array}
  & \left(\begin{array}{ccccc|ccc}
   1& & & & & & &  \\
   &1& & & & & &  \\
   & &1& &1& & &  \\
   & & &1& & & &  \\
   & & & &1& & &  \\
\hline
   & & & & &1& &  \\
   & & & & &1&1&  \\
   & & & & & & &1
  \end{array}\right)
\end{array}
\end{equation*}
We see that there is only one shifted diagonal in each square block (which happens to be the main diagonal). These yield an isotopism $\Theta_{\alpha,2}$, where
$\alpha=\left(\begin{array}{cccccccc}1&2&3&4&5&6&7&8\\2&1&8&6&7&4&5&3\end{array}\right)$.
After verifying that $\Theta_{\alpha,2}$ fixes all other rows we conclude that $\Theta_{\alpha,2}\in\A(L)$ (this is the only nontrivial autotopism of $L$).

   \item No candidates: Let $k=2$, $i=7$ and $j=7$. So, $\alpha(2)=1$ and $\alpha(7)=7$. We have $\sigma_{7,2}=(1,6,8)(2,7,3,5,4)$ (see (\ref{eq:cs2})) and $\sigma_{7,7,2}=\pi_7^{-1}\sigma_{7,2}\pi_7=(1,5,2,7,6)(3,4,8)$. If we rearrange the rows of $T(L,2)$  by the cycles in $\sigma_{7,7,2}$, and the columns by the cycles in $\sigma_7$, and delete 1's according to Step 5(3) and Step 5(4), we obtain the following matrix $T^7(L,7,7,2)$:
\begin{equation*}
\begin{array}{cc}
    &
    \begin{array}{cccccccc}{}^1 & {}^7 & {}^4 & {}^8 & {}^5 & {}^2 & {}^6 & {}^3
    \end{array}
  \\
  \begin{array}{r}  {}_1\\{}_5\\{}_2\\{}_7\\{}_6\\{}_3\\{}_4\\{}_8
  \end{array}
  & \left(\begin{array}{ccccc|ccc}
   & & & & & & &  \\
   & &1& &1& & &  \\
   1& & & & & & &  \\
   &1& & & & & &  \\
   & &1& &1& & &  \\
\hline
   & & & & &1&1&  \\
   & & & & &1&1&  \\
   & & & & & & &1
  \end{array}\right)
\end{array}
\end{equation*}
Since there are no 1's in the row indexed by 1, the matrix $T^7(L,7,7,2)$ doesn't yield any candidate to check in Step 8.
 \end{enumerate}
 \end{example}

\begin{remark}
For most Latin squares checked, the matrices $T^7(L,i,j,k)$ are as sparse as the ones illustrated above. Only for highly structures Latin squares we have dense matrices that produce a large number of diagonals satisfying Property (*). Such highly structured Latin squares are best handled by accelerated versions of ``nauty''.
\end{remark}
\section{A bound for $|\A(L)|$}
The algorithm described in the previous section yields a bound on the size of $\A(L)$. For convenience we introduce a new notation:

\begin{notation}\label{not6}
Let $L$ be a Latin square and let $C$ be a cycle in some row permutation of $L$. Denote $\lambda(L,C):=\min_{s\in C}\left(\lambda(L,s)\right)$.
\end{notation}

\begin{theorem}\label{thm4:1}
Let $L$ be a reduced Latin square of order $n$ and let $\sigma_l$ be a row in $L$ whose cycle representation contains $\nu(\sigma_l)=\nu(L)$ disjoint cycles. Suppose $\sigma_l=(C_1)(C_2)\ldots(C_{\nu(L)})$ is the cycle representation of $\sigma_l$ and assume $l\in C_1$. Then,
\begin{equation}\label{eq4:1}
    |\A(L)|\leq n \delta(L)\lambda(L,l)\prod_{i=2}^{\nu(L)} \lambda(L,C_i).
\end{equation}
\end{theorem}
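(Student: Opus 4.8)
The plan is to read the bound (\ref{eq4:1}) off the algorithm of Section~\ref{sec2}, interpreting each factor as the number of choices available at the corresponding step. Since $L$ is reduced and any autotopism $\Theta$ satisfies $\Theta(L)=L$, which is reduced, Proposition~\ref{prop1:1}(ii) gives $\Theta=\Theta_{\alpha,j}$ for some $\alpha\in S_n$ and some column $\pi_j$; put $k=\alpha^{-1}(1)$ and $i=\alpha^{-1}(l)$. By Proposition~\ref{prop3:1}, $k\in\Delta(L)$, $i\in R_k(L,l)$, and $\alpha$ is determined by an all-$1$'s generalized diagonal $D_\alpha$ of $T^l(L,i,j,k)$ satisfying property~(*). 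The map $\Theta_{\alpha,j}\mapsto\bigl(i,j,k,D_\alpha\bigr)$ is injective, because $j$ and (through $\alpha$) $k=\alpha^{-1}(1)$ and $i=\alpha^{-1}(l)$ are read off $\Theta_{\alpha,j}$, and $D_\alpha$ then determines $\alpha$. Hence, writing $N(i,j,k)$ for the number of all-$1$'s generalized diagonals of $T^l(L,i,j,k)$ satisfying~(*),
\[
 |\A(L)|\ \le\ \sum_{k\in\Delta(L)}\ \sum_{i\in R_k(L,l)}\ \sum_{j=1}^{n}\, N(i,j,k).
\]
As $|\Delta(L)|=\delta(L)$ and, by Observation~\ref{obs1}, $|R_k(L,l)|=\lambda(L,l)$ for every $k\in\Delta(L)$, the theorem reduces to the estimate $N(i,j,k)\le\prod_{r=2}^{\nu(L)}\lambda(L,C_r)$, valid for each triple $(i,j,k)$ occurring in the sum.

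To prove that estimate I would show that a diagonal counted by $N(i,j,k)$ is pinned down by a single value on each cycle of $\sigma_l$. If $D_\alpha$ satisfies~(*), then unwinding property~(*) through the identity (\ref{eq3:6}) exactly as in the proof of Proposition~\ref{prop3:1} — using that a shifted diagonal inside a square block is determined by any one of its cells, so that the block-shifted-diagonal shape is equivalent to the conjugacy relation — shows that $\alpha$ conjugates $\sigma_{i,j,k}$ to $\sigma_l$, i.e. $\sigma_{i,j,k}\bigl(\alpha^{-1}(x)\bigr)=\alpha^{-1}\bigl(\sigma_l(x)\bigr)$ for all $x$. Consequently the value of $\alpha^{-1}$ at one element of a cycle $C_r$ of $\sigma_l$ determines $\alpha^{-1}$ on all of $C_r$, and since $C_1,\dots,C_{\nu(L)}$ partition $[n]$, a choice of a representative $s_r\in C_r$ for each $r$ together with the values $\alpha^{-1}(s_1),\dots,\alpha^{-1}(s_{\nu(L)})$ determines $\alpha$. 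Take $s_1=l$: then $\alpha^{-1}(l)=i$ is forced by (\ref{eq3:3}) (which is precisely why Step~5(3) switched off the offending $1$'s in row $i$ and column $l$), so this cycle contributes no freedom. For $r\ge 2$ take $s_r\in C_r$ with $\lambda(L,s_r)=\lambda(L,C_r)$; since $(\alpha^{-1}(s_r),s_r)$ is a $1$ of $T^l(L,i,j,k)$ and $T^l(L,i,j,k)$ arises from $T(L,k)$ by permuting rows and columns and switching some $1$'s to $0$'s, column $s_r$ has at most as many $1$'s as column $s_r$ of $T(L,k)$, namely $|R_k(L,s_r)|=\lambda(L,s_r)=\lambda(L,C_r)$ by Observation~\ref{obs1}. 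Thus $\alpha^{-1}(s_r)$ admits at most $\lambda(L,C_r)$ values, and multiplying over $r=2,\dots,\nu(L)$ gives $N(i,j,k)\le\prod_{r=2}^{\nu(L)}\lambda(L,C_r)$.

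Combining with the displayed inequality yields $|\A(L)|\le n\,\delta(L)\,\lambda(L,l)\prod_{i=2}^{\nu(L)}\lambda(L,C_i)$, which is (\ref{eq4:1}). The main obstacle is the middle paragraph: one must verify carefully that the block-shifted-diagonal shape imposed by~(*) is genuinely equivalent to the conjugacy relation $\sigma_l=\alpha\sigma_{i,j,k}\alpha^{-1}$, so that a valid diagonal is indeed reconstructible from the $\nu(L)$ seed values; and one must handle the degenerate case in which $1$ and $l$ lie in the same cycle $C_1$ of $\sigma_l$ — there $\alpha^{-1}(l)=i$ already determines that whole block, and the additional constraint $\alpha^{-1}(1)=k$ can only delete candidates, so the count is still at most $\prod_{r=2}^{\nu(L)}\lambda(L,C_r)$. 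The remaining verifications amount to tracking the $1$-switches made in Step~5.
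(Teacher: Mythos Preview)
Your proof is correct and follows essentially the same approach as the paper's own proof: both count the triples $(k,i,j)$ by $\delta(L)\cdot\lambda(L,l)\cdot n$ and then bound the number of property-(*) diagonals in each $T^l(L,i,j,k)$ by $\prod_{r=2}^{\nu(L)}\lambda(L,C_r)$, using that a shifted diagonal in a block is determined by any one of its entries and that the column indexed by a minimiser $s_r\in C_r$ has at most $\lambda(L,s_r)$ ones. Your presentation is somewhat more explicit---you spell out the injectivity of $\Theta_{\alpha,j}\mapsto(i,j,k,D_\alpha)$ and the equivalence between property~(*) and the conjugacy $\sigma_l=\alpha\sigma_{i,j,k}\alpha^{-1}$---whereas the paper appeals directly to Observations~\ref{obs1} and~\ref{obs2} and the Step~5(3) deletions; but the underlying argument is the same. (Note incidentally that since $L$ is reduced, $\sigma_l(1)=l$, so $1$ and $l$ always lie in the same cycle $C_1$; your ``degenerate case'' is in fact the only case.)
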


\begin{proof}
There are $\delta(L)$ possible values of $k\in \Delta(L)$, $\lambda(L,l)$ possible values of $i\in R_k(L,l)$ (by Observation~\ref{obs2}) and $n$ possible columns $\pi_j$. Hence, there are $n \delta(L)\lambda(L,l)$ distinct matrices $T^l(L,i,j,k)$.
Consider one such matrix $T^l(L,i,j,k)$. By the discussion in the previous section, it defines all autotopisms $\Theta_{\alpha,j}$ of $L$ satisfying $\alpha(k)=1$ and $\alpha(i)=l$.
By Proposition~\ref{prop3:1}, the number of all-1's generalized diagonals in $T^l(L,i,j,k)$ satisfying the property (*) is an upper bound for the number of such autotopisms. By Property (*), any such diagonal is composed of all-1's block shifted diagonals in the subdivision of $T^l(L,i,j,k)$ defined in Step 5(4). The columns of each such block are indexed by a different cycle of $\sigma_l$.

Let $C$ be a cycle in the cycle representation of $\sigma_l$. The number of all-1's shifted diagonals in the columns indexed by the members of $C$ is at most $\lambda(L,C)$ (by Observations~\ref{obs1} and \ref{obs2}). Thus, the number of all-1's generalized diagonals in $T^l(L,i,j,k)$ satisfying the property (*) is at most $\prod_{i=1}^{\nu(L)} \lambda(L,C_i)$. By the deletions performed in Step(3), the columns defined by the cycle $C_1$ containing $l$ can have at most one all-1's shifted diagonal. Hence, we omit $\lambda(L,C_1)$ from the product.
\end{proof}

\begin{example}\label{ex3}
Consider the Latin square $L$ in Example~\ref{ex0:1}. The row $\sigma_7=(1,7,4,8,5)(2,6,3)$ has $\nu(L)=2$ cycles. Also, $\lambda(L,7)=3$ (there are 3 rows with the same cycle structure). Now, $7\in C_1=(1,7,4,8,5)$ and for $C_2=(2,6,3)$, $\lambda(L,C_2)=\lambda(L,3)=1$. Thus, by (\ref{eq4:1}), $\A(L)\le 8\cdot 2\cdot 3\cdot 1=48$.
\end{example}

The next proposition is closely related to the classical result that all principal loop isotopes of a loop agree if and only if the loop is a group (see, e.g., \cite{Bry1966}).

\begin{proposition}\label{prop4:1}
If equality holds in (\ref{eq4:1}) for a reduced Latin square $L$, then $L$ is the Cayley table of a group.
\end{proposition}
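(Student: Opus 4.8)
The plan is to convert equality in~(\ref{eq4:1}) into the statement that $\A(L)$ is so large that $L$, read as a loop multiplication table, has to be associative. Throughout, regard $L$ as the loop $(Q,\cdot)$ on $Q=[n]$ with $x\cdot y:=L(x,y)$; since $L$ is reduced, $1$ is a two-sided identity, the rows are the left translations $\sigma_i=L_i$, and the columns are the right translations $\pi_j=R_j$. In the proof of Theorem~\ref{thm4:1} the bound is obtained as (number of matrices $T^l(L,i,j,k)$) $\times$ (maximal number of admissible generalized diagonals in a single such matrix), and distinct diagonals in distinct matrices give distinct autotopisms. Hence equality forces that for \emph{every} triple $(k,i,j)$ with $k\in\Delta(L)$, $i\in R_k(L,l)$ and $j\in[n]$, the matrix $T^l(L,i,j,k)$ carries exactly $\prod_{t=2}^{\nu(L)}\lambda(L,C_t)$ admissible diagonals (a number which is $\geq 1$), every one of which survives Step~8, i.e.\ gives a genuine $\Theta_{\alpha,j}\in\A(L)$ with $\alpha^{-1}(1)=k$ and $\alpha^{-1}(l)=i$. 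In particular, for every column index $j$ and every $k\in\Delta(L)$ there is an autotopism $\Theta_{\alpha,j}\in\A(L)$ with $\alpha^{-1}(1)=k$.

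I would then translate such an autotopism into the language of principal isotopes. Fix $j$, $k\in\Delta(L)$ and $\Theta_{\alpha,j}=(\alpha,\beta,\gamma)\in\A(L)$ with $\alpha^{-1}(1)=k$, so $\gamma=\alpha\pi_j^{-1}=\alpha R_j^{-1}$ and $\beta=\gamma\sigma_k$. Writing $\Theta_{\alpha,j}(L)=L$ as $L(\alpha(r),\beta(c))=\gamma(L(r,c))$, using $\alpha=\gamma R_j$, and substituting $a=r\cdot j$ and $b=k\cdot c$, one obtains
\begin{equation*}
\gamma(a)\cdot\gamma(b)=\gamma\big(R_j^{-1}(a)\cdot L_k^{-1}(b)\big)\qquad\text{for all }a,b\in Q,
\end{equation*}
so $\gamma$ is an isomorphism from the principal isotope $(Q,\circ_{j,k})$, with $a\circ_{j,k}b:=R_j^{-1}(a)\cdot L_k^{-1}(b)$, onto $(Q,\cdot)$; moreover $\gamma$ carries the identity $k\cdot j$ of $(Q,\circ_{j,k})$ to the identity $1$ of $(Q,\cdot)$, because $\gamma(k\cdot j)=\alpha R_j^{-1}(k\cdot j)=\alpha(k)=1$. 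Combining this with the first paragraph: for all $j\in[n]$ and all $k\in\Delta(L)$, the principal isotope $(Q,\circ_{j,k})$ is isomorphic to $(Q,\cdot)$ by an isomorphism that preserves the identity element.

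Next I would establish $\Delta(L)=[n]$ and then invoke the classical fact. The set $N:=\{\alpha\in S_n:\Theta_{\alpha,j}\in\A(L)\text{ for some }j\}$ is a subgroup of $S_n$ (closed under composition and inverses by Proposition~\ref{prop1:1}); Observation~\ref{obs0} gives $\{\alpha^{-1}(1):\alpha\in N\}\subseteq\Delta(L)$, while the first paragraph gives the opposite inclusion, so $\Delta(L)$ is exactly the $N$-orbit of $1$, that is, the orbit of row~$1$ under the row-action of $\A(L)$. One then argues that this orbit is all of $[n]$: the autotopisms with $k=1$ produced above already render $\A(L)$ transitive on columns and on symbols (such an autotopism sends column~$j$ and symbol~$j$ to column~$1$ and symbol~$1$ as $j$ varies), and a closer examination of the equality constraint on the matrices $T^l(L,i,j,k)$ — the block indexed by the cycle $C_1$ contributes precisely one admissible shifted diagonal, while any choice of admissible shifted diagonals in the remaining blocks must reassemble into a \emph{surviving} autotopism — promotes this to transitivity on rows, whence $\Delta(L)=[n]$ and $\delta(L)=n$. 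With $\Delta(L)=[n]$, the second paragraph says that \emph{every} principal isotope of $(Q,\cdot)$ is isomorphic to $(Q,\cdot)$ by an identity-preserving isomorphism, which is exactly the condition behind the classical characterization recalled just before the statement (see~\cite{Bry1966}); hence $(Q,\cdot)$, i.e.\ $L$, is the Cayley table of a group.

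The step I expect to be the main obstacle is $\Delta(L)=[n]$ — equivalently, that equality forces $\A(L)$ to act transitively on the rows of $L$, and not merely (as is immediate) on its columns and symbols. The reduction of the first paragraph and the computation of the second are routine, and the final implication is classical, so the genuine work is in showing that a proper $N$-orbit of row~$1$ is incompatible with every matrix $T^l(L,i,j,k)$ being maximally productive. Should that argument prove delicate, a fallback is to avoid the isotopy theorem altogether and extract associativity of $(Q,\cdot)$ directly from the family of relations $\gamma(a)\cdot\gamma(b)=\gamma\big(R_j^{-1}(a)\cdot L_k^{-1}(b)\big)$ as $j$ and $k$ vary, with the aim of producing an autotopism of the form $(\id,R_b,R_b)$ for every $b\in Q$ — a property which holds if and only if $(Q,\cdot)$ is associative.
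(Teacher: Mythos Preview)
Your main line of attack --- reinterpreting each $\Theta_{\alpha,j}$ as an isomorphism between a principal isotope $(Q,\circ_{j,k})$ and $(Q,\cdot)$ and then invoking the Bryant--Schneider characterization --- is correct up to and including the second paragraph, but the gap you flag at $\Delta(L)=[n]$ is real and your sketch for closing it is not convincing. Transitivity of $\A(L)$ on columns and symbols does not by itself force transitivity on rows, and ``a closer examination of the equality constraint'' is not an argument. So as written, the main route does not reach the conclusion.

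The paper sidesteps this difficulty entirely, and in a way very close to your fallback. Instead of trying to vary $k$ over all of $[n]$ (which would require $\delta(L)=n$), the paper exploits the factor $n$ in~(\ref{eq4:1}), which comes from the $n$ column choices. Equality forces that whenever $\Theta_{\alpha,j}\in\A(L)$ one has $\Theta_{\alpha,m}\in\A(L)$ for \emph{every} column index $m$. One then simply composes: for any two column indices $k,l$,
\[
\Phi \;=\; \Theta_{\alpha,k}^{-1}\Theta_{\alpha,l} \;=\; \bigl(\id,\;\sigma_{\alpha^{-1}(1)}^{-1}\pi_k\pi_l^{-1}\sigma_{\alpha^{-1}(1)},\;\pi_k\pi_l^{-1}\bigr)\in\A(L).
\]
Since the row component is trivial and $L$ is reduced, looking at the first row forces the column and symbol components to coincide, so $\Phi=(\id,\beta,\beta)$. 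Proposition~\ref{prop1:1}(ii) then gives $\beta=\pi_m^{-1}$ for some $m$, i.e.\ $\pi_l\pi_k^{-1}=\pi_m$. Thus the column permutations are closed under composition, hence form a subgroup of $S_n$, and by Albert~\cite{al43} $L$ is the Cayley table of a group. This is exactly the autotopism $(\id,R_b,R_b)$ you were aiming for in your fallback, obtained in one stroke by varying $j$ rather than $k$; your loop-isotope formalism, while correct, is heavier machinery than the problem requires.
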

\begin{proof}
Let $\Theta_{\alpha,j}\in\A(L)$. Since equality holds in (\ref{eq4:1}), we must have $\Theta_{\alpha,k}\in\A(L)$ for all other $k\in[n]$. Thus, for any $k,l\in[n]$, we have,
\begin{equation*}
   \Phi=\Theta_{\alpha,k}^{-1}\Theta_{\alpha,l}=(1, \sigma^{-1}_{\alpha^{-1}(1)}\pi_k\pi_l^{-1}\sigma_{\alpha^{-1}(1)}, \pi_k\pi_l^{-1})\in\A(L).
\end{equation*}
Let $\sigma=\sigma_{\alpha^{-1}(1)}$ and $\beta=\sigma^{-1}\pi_k\pi_l^{-1}\sigma$.
We have, $\Phi=(1,\beta,\sigma\beta\sigma^{-1})\in\A(L)$. It follows from the convention at the end of Section~\ref{sec1} that after permuting the columns of $L$ by $\beta$, the first row becomes $\beta^{-1}$, and we have to apply $\beta$ on the symbols in order to transform the first row back into the identity permutation.
Hence, we have $\Phi=(1,\beta,\beta)$. By Proposition~\ref{prop1:1}(ii), $\beta=\pi_j^{-1}$ for some column $j$. We have $\pi_l\pi_k^{-1}=\pi_j$. Since this holds for any two columns $\pi_k$ and $\pi_l$, it follows that the columns of $L$ form a subgroup of $S_n$. This implies that $L$ is the Cayley table of some group (see \cite{al43}).
\end{proof}

The converse of Proposition~\ref{prop4:1} is not true. That is, if $L$ is the Cayley table of a group, equality in (\ref{eq4:1}) does not necessarily hold. However, for a large family of groups this is the case, showing that the bound in (\ref{eq4:1}) is tight:

\begin{proposition}
If $L$ is the Cayley table of a cyclic group, then equality holds in (\ref{eq4:1}).
\end{proposition}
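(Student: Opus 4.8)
The plan is to evaluate both sides of (\ref{eq4:1}) directly for $L$ the Cayley table of the cyclic group $\mathbb{Z}_n$ and check that they agree. For the left-hand side, I would apply (\ref{eq1:0}): since $\Aut(\mathbb{Z}_n)$ is isomorphic to the unit group $(\mathbb{Z}/n\mathbb{Z})^{*}$, which has order $\varphi(n)$ (Euler's totient), we get $|\A(L)| = n^{2}\varphi(n)$. On the right-hand side the factor $\delta(L)$ is already determined: as observed just after Notation~\ref{not3}, a group Cayley table satisfies $\{\sigma_{i,k}\}_{i=1}^{n} = \{\sigma_i\}_{i=1}^{n}$ for every $k$, so $\delta(L) = n$. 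It therefore suffices to show that $\lambda(L,l)\prod_{i=2}^{\nu(L)}\lambda(L,C_i) = \varphi(n)$, which makes the right-hand side of (\ref{eq4:1}) equal to $n\cdot n\cdot\varphi(n) = n^{2}\varphi(n) = |\A(L)|$.

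To compute these quantities I would first identify the rows. Identifying the elements of $\mathbb{Z}_n$ with $\{1,\dots,n\}$ in the natural way (so that the identity is $1$ and the Cayley table is reduced), the row corresponding to the element $g$, read as a permutation under the convention of Section~\ref{sec1}, is the translation $x\mapsto g+x$. On $\mathbb{Z}_n$ this permutation has all of its cycles of the common length $n/\gcd(g,n)$, the additive order of $g$, hence it splits into exactly $\gcd(g,n)$ disjoint cycles; in particular the cycle structure of a row depends only on $d=\gcd(g,n)$. A generator of $\mathbb{Z}_n$ gives $d=1$, so some row is a single $n$-cycle, and since every permutation has at least one cycle this forces $\nu(L)=1$. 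Consequently the distinguished row $\sigma_l$ of Theorem~\ref{thm4:1} is a single $n$-cycle (and trivially $l\in C_1=[n]$), the product $\prod_{i=2}^{\nu(L)}\lambda(L,C_i)$ is empty and equals $1$, and $\lambda(L,l)$ is simply the number of rows that are single $n$-cycles, i.e. the number of elements of $\mathbb{Z}_n$ of additive order $n$, namely its $\varphi(n)$ generators. Thus $\lambda(L,l)=\varphi(n)$ and equality holds in (\ref{eq4:1}).

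I do not expect a genuine obstacle: once the cycle structure of translations in $\mathbb{Z}_n$ is in hand the argument is a short computation. The only points needing a little care are the bookkeeping with the reduced-square labelling (so that translation-by-$g$ really is the row $\sigma_g$ in the paper's convention), and the conceptual observation explaining why the bound of Theorem~\ref{thm4:1} is tight here: the two potential sources of slack both vanish for cyclic groups, since there is no row with more than one cycle (which forces the product to be trivial) and the count $\lambda(L,l)$ of rows sharing the cycle structure of $\sigma_l$ meets $\varphi(n)=|\Aut(\mathbb{Z}_n)|$ exactly rather than exceeding it.
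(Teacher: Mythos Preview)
Your proposal is correct and follows essentially the same approach as the paper: compute $|\A(L)|=n^{2}\varphi(n)$ via (\ref{eq1:0}), note $\delta(L)=n$, choose $\sigma_l$ to be a single $n$-cycle row coming from a generator so that $\nu(L)=1$ and the product is empty, and count $\lambda(L,l)=\varphi(n)$. Your write-up is somewhat more explicit about the cycle structure of translations in $\mathbb{Z}_n$, but the argument is the same.
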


\begin{proof}
Suppose $L$ is the Cayley table of the cyclic group $G$. By (\ref{eq1:0}), $|\A(G)|=n^2\phi(n)$ (Euler function). On the other hand, since $L$ is the Cayley table of a group, $\delta(L)=n$. The rows of $L$, viewed as permutations, form a subgroup of $S_n$ that is isomorphic to $G$, where the rows act by left composition. Since $G$ is cyclic, each of its generators corresponds to a row that is a single cycle. So, there are $\phi(n)$ single cycle rows in $L$. Let $\sigma_l$ be a single cycle row. By (\ref{eq4:1}), $|\A(L)|\le n^2\phi(n)$. Thus, we have equality.
\end{proof}
\section{Complexity Considerations}
The non-polynomial part of the algorithm is where the block shifted diagonals need to be combined to form generalized diagonals of $T^l(L,i,j,k)$ (Step 6). The complexity of this step is of order $n^{\nu(L)}$. Since $\nu(L)$ is not bounded, the algorithm, in the worst case, is of order $n^{n/2}$. However, as we shall see, in the vast majority of the cases, $\nu(L)$ is bounded and low, and for Latin squares that are not highly structured, the number of block shifted diagonals that need to be combined is low, if they exist at all. If we bound $\nu(L)$, the complexity of the computation of $\A(L)$ is polynomial:

\begin{theorem}\label{thm5:1}
Let $k$ be a fixed positive integer. For the set of all reduced Latin squares $L$ that have a row or a column with at most $k$ cycles, $\A(L)$ satisfies
\begin{equation}\label{eq5:1}
    |\A(L)|\le n\delta(n)\lambda(L)^k,
\end{equation}
and it can be computed in polynomial time in $n$.
\end{theorem}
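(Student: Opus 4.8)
The plan is to split the statement into two parts: the cardinality bound \eqref{eq5:1}, which follows quickly from Theorem~\ref{thm4:1}, and the polynomial running time, which requires accounting carefully for each step of the algorithm under the hypothesis $\nu(L)\le k$.

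First I would establish \eqref{eq5:1}. By hypothesis $L$ (or one of its conjugates obtained by transposing, which has the same autotopy group up to a fixed relabelling) has a row $\sigma_l$ with $\nu(\sigma_l)=\nu(L)\le k$. Apply Theorem~\ref{thm4:1}: since $\lambda(L,C_i)\le\lambda(L)$ for every cycle $C_i$ and $\lambda(L,l)\le\lambda(L)$, the product $\lambda(L,l)\prod_{i=2}^{\nu(L)}\lambda(L,C_i)$ has at most $\nu(L)\le k$ factors, each bounded by $\lambda(L)$, giving $|\A(L)|\le n\,\delta(L)\,\lambda(L)^{k}$. Here I would also remark that $\delta(L)\le n$, so in fact $|\A(L)|\le n^2\lambda(L)^k$, which is polynomial in $n$ for fixed $k$; the statement as written keeps the sharper $\delta(L)$ factor. (I would note that the theorem statement writes $\delta(n)$; this should read $\delta(L)$, matching Notation~\ref{not3}.)

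Next I would go through the algorithm of Section~\ref{sec2} step by step and bound the work, treating $k$ as a constant. Step~1 (choosing $\sigma_l$) requires computing the cycle decomposition of each of the $n$ rows: $O(n^2)$. Step~2 requires, for each candidate $k\in\Delta(L)$, forming the matrix $T(L,k)$; computing $\Delta(L)$ itself means comparing, for each $k\in[n]$, the multiset of cycle structures of $\{\sigma_{i,k}\}_{i=1}^n$ against that of $\{\sigma_i\}$, which is $O(n^3)$ overall, and each $T(L,k)$ is built in $O(n^2)$. The outer loops over $k\in\Delta(L)$ (at most $n$ choices), $i\in R_k(L,l)$ (at most $\lambda(L)\le n$ choices by Observation~\ref{obs1}), and columns $\pi_j$ ($n$ choices) produce at most $n^3$ matrices $T^l(L,i,j,k)$; each is obtained from $T(L,k)$ by the rearrangements and deletions of Step~5 in $O(n^2)$ time. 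The crucial point is Step~6: for a fixed matrix $T^l(L,i,j,k)$, finding all all-1's shifted diagonals in the square blocks costs $O(n^2)$ (each of the $\nu(L)^2\le k^2$ blocks is scanned, and a shifted diagonal is determined by a single starting 1 and checked in linear time along the block), and combining them into generalized diagonals of $T^l(L,i,j,k)$ satisfying Property~(*) costs $O(n^{\nu(L)})\le O(n^{k})$, since one makes an independent choice of block shifted diagonal for each of the $\nu(L)\le k$ cycles of $\sigma_l$ and the number of such diagonals per cycle is at most $\lambda(L)\le n$. Finally, Steps~7–8 check each resulting candidate $\Theta_{\alpha,j}$ against the remaining $n-2$ rows via \eqref{eq6:01} in $O(n^2)$ per candidate; by the bound \eqref{eq5:1} just proved, the number of candidates surviving to Step~8, summed appropriately, is polynomial. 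Multiplying the number of matrices ($O(n^3)$) by the per-matrix cost ($O(n^{k})$ for the combination step, dominating) gives a total of $O(n^{k+3})$ up to the verification cost, hence polynomial in $n$.

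The main obstacle is the bookkeeping in Step~6: I must argue that although a generalized diagonal of $T^l(L,i,j,k)$ is \emph{a priori} one of up to $n!$ objects, Proposition~\ref{prop3:1} restricts attention to those respecting Property~(*), and such a diagonal is assembled block-by-block over the $\nu(L)$ cycles of $\sigma_l$, with at most $\lambda(L)$ choices of block shifted diagonal per cycle; this is exactly where the hypothesis $\nu(L)\le k$ converts an exponential search into an $O(n^{k})$ one. A secondary point requiring care is that the enumeration in Steps~6–7 can \emph{produce} combinations that fail the compatibility needed to form a single generalized diagonal (the chosen block shifted diagonals must use disjoint column sets, which they automatically do since distinct cycles of $\sigma_l$ index disjoint column blocks), so no extra consistency check inflates the count; and one must confirm that the verification in Step~8 does not dominate, which follows from \eqref{eq5:1} bounding the number of actual autotopisms and the analysis of the previous section bounding the number of false candidates per matrix. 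With these points in place, the total running time is a fixed polynomial in $n$, completing the proof.
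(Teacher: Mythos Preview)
Your proposal is correct and follows essentially the same approach as the paper: reduce to the row case by transposition, bound the number of matrices $T^l(L,i,j,k)$ by $n\,\delta(L)\,\lambda(L)$, and use $\nu(L)\le k$ to bound the per-matrix enumeration of Property~(*) diagonals by $\lambda(L)^{k-1}$, whence both \eqref{eq5:1} and the polynomial running time. The only cosmetic difference is that you invoke Theorem~\ref{thm4:1} to obtain \eqref{eq5:1} while the paper re-derives that count directly from the algorithm; since the proof of Theorem~\ref{thm4:1} \emph{is} that count, the two arguments coincide, and your complexity bookkeeping is simply a more explicit version of the paper's one-line ``polynomial complexity, since only elementary operations with permutations are involved.''
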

\begin{proof}
Without loss of generality we may assume that $L$ contains a row $\sigma_l$ that decomposes into at most $k$ cycles. Thus, $\nu(L)\le k$ (if it is a column we take $L^T$. The relation between $\A(L)$ and $\A(L^T)$ is obvious).
The computation of each one of the matrices $T^l(L,i,j,k)$ is of polynomial complexity, since only elementary operations with permutations in $S_n$ are involved, and there are $n\delta(n)\lambda(L)$ such matrices.
As noted in Step 6, each cycle $C$ of $\sigma_l$, except for the cycle containing 1 and $l$, requires at most $\lambda(L,C)\le\lambda(L)$ iterations in order to find its corresponding all-1's block shifted diagonals.
Thus, the process of finding all the all 1's generalized diagonals satisfying the property (*) requires at most $\lambda(L)^{k-1}$ iterations. Thus, (\ref{eq5:1}) follows. Since $\lambda(L)<n$ and $\delta(n)\le n$, the computation of $\A(L)$ is polynomial in $n$.
\end{proof}
Since many Latin squares have a line that is a single cycle, we single out this case:
\begin{corollary}\label{cor4:1}
Let $L$ be a reduced Latin square of order $n$ such that at least one of its lines is a single cycle, then
\begin{equation*}
    |\A(L)|\le n^2(n-1),
\end{equation*}
\end{corollary}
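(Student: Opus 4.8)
The plan is to read this off Theorem~\ref{thm4:1} in the special case $\nu(L)=1$. First I would reduce to the situation where the single cycle is a \emph{row}: if some column of $L$ is a single $n$-cycle, replace $L$ by its transpose $L^{T}$, which is again a reduced Latin square and satisfies $|\A(L^{T})|=|\A(L)|$ (the same device used in the proof of Theorem~\ref{thm5:1}). So I may assume there is a row $\sigma_l$ that is a single $n$-cycle. Then $\nu(\sigma_l)=1$, and since no permutation of $[n]$ decomposes into fewer than one cycle, $\nu(L)=\nu(\sigma_l)=1$; in particular $\sigma_l$ has the minimal number of cycles among the rows, so Theorem~\ref{thm4:1} applies with this choice of $l$.

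Next I would unwind the bound (\ref{eq4:1}). The cycle representation of $\sigma_l$ is a single cycle $(C_1)$ of length $n$, and certainly $l\in C_1$, so the product $\prod_{i=2}^{\nu(L)}\lambda(L,C_i)$ ranges over the empty index set and equals $1$. Theorem~\ref{thm4:1} therefore yields $|\A(L)|\le n\,\delta(L)\,\lambda(L,l)$.

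It then remains to estimate the two surviving factors. By definition $\delta(L)=|\Delta(L)|\le n$, since $\Delta(L)\subseteq[n]$. For $\lambda(L,l)$, recall that it counts the rows of $L$ whose cycle structure equals that of $\sigma_l$, i.e. the rows that are single $n$-cycles; but for $n\ge 2$ the first row $\sigma_1=\id$ consists of $n$ fixed points and hence is \emph{not} a single $n$-cycle, so it is excluded from this count, giving $\lambda(L,l)\le n-1$. Substituting both estimates gives $|\A(L)|\le n\cdot n\cdot(n-1)=n^{2}(n-1)$.

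There is essentially no obstacle here: the argument is just bookkeeping on top of Theorem~\ref{thm4:1}. The only point that requires a moment's attention is the observation that $\sigma_1=\id$ can never be a single cycle when $n\ge 2$, which is precisely what sharpens the crude estimate $n\,\delta(L)\,\lambda(L,l)\le n^{3}$ down to $n^{2}(n-1)$.
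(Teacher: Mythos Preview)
Your argument is correct and matches the paper's (implicit) derivation: the corollary is placed immediately after Theorem~\ref{thm5:1} as the case $k=1$ of~(\ref{eq5:1}), which together with $\delta(L)\le n$ and $\lambda(L)\le n-1$ (the identity row being excluded) yields $n^2(n-1)$. Whether one quotes Theorem~\ref{thm4:1} directly, as you do, or its repackaging in Theorem~\ref{thm5:1}, the content is identical.
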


In order to estimate the proportion of reduced Latin squares having a line that decomposes into at most $k$ cycles, we assume that there is weak dependence between the cycle structures of the lines. This has not been proved but seems to be the case, as indicated by the experiment described in the next paragraph.

Let $P_k(n)$ be the probability that a randomly chosen reduced Latin square has a line with at most $k$ cycles. We shall try to approximate $P_1(n)$.
There are approximately $n!/e$ derangements (permutations without fixed points) of order $n$,  among which $(n-1)!$ are single cycles.
Thus, the probability of each line, that is not the first row or the first column, of being a single cycle is approximately $e/n$. Assuming the weak dependence mentioned above, the probability that none of the rows $2,\ldots,n$ and columns $2,\ldots,n$ is a single cycle is approximately $(1-(e/n))^{2(n-1)}\approx e^{-2e}\approx0.0044$ for large $n$.
Thus, $P_1(n)\approx0.9956$. Indeed, 99,580 of 100,000 randomly generated reduced Latin squares of order 20 had at least one single cycle line. The probability that a random derangement of order 20 has at most 2 cycles is $\approx 0.475$. Thus, assuming the weak dependence mentioned above, we have $P_2(20)\approx 0.999999999977$. (The number of $n$-derangements with exactly $k$ cycles, denoted $d(n,k)$, can be computed using the recurrence relations $d(n,k)=(n-1)d(n-1,k)+(n-1)d(n-2,k-1)$ when $k\le n/2$, and $d(n,k)=0$ when $k>n/2$, with base case $d(2,1)=1$.)

Given two rows, indexed $r$ and $s$, of a Latin square, Cavenagh, Greenhill and Wanless \cite{Cav08} defined $\sigma_{s,r}$ as the permutation that transforms $\sigma_r$ to $\sigma_s$. That is, $\sigma_{s,r}=\sigma_s\sigma_r^{-1}$. It is said that the rows $r$ and $s$ \emph{have relative cycle structure} $c$ if the cycle structure of $\sigma_{s,r}$ is $c$ (this definition is symmetric in $r$ and $s$ since the cycle structure of a permutation and its inverse are the same). We have:

\begin{theorem}\label{thm5:2}
Let $k$ be a fixed positive integer. For the set of all reduced Latin squares $L$ that have two rows or two columns with relative cycle structure containing at most $k$ cycles, $\A(L)$ satisfies
\begin{equation*}
    |\A(L)|\le n^2(n-1)^k,
\end{equation*}
and it can be computed in polynomial time in $n$.
\end{theorem}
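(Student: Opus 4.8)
The plan is to reduce this to Theorem~\ref{thm5:1} by passing to an isotopic reduced Latin square in which one of the two distinguished rows becomes the ``difference'' $\sigma_s\sigma_r^{-1}$ of the original pair. Suppose $\sigma_r$ and $\sigma_s$ are two rows of $L$ whose relative cycle structure, i.e.\ the cycle structure of $\sigma_{s,r}=\sigma_s\sigma_r^{-1}$, has at most $k$ cycles; the ``two columns'' case is handled by passing to $L^T$, which is again reduced, whose autotopy group has the same order (translating autotopisms by $(\alpha,\beta,\gamma)\mapsto(\beta,\alpha,\gamma)$), and whose rows are the columns of $L$. Choose any $\alpha\in S_n$ with $\alpha(r)=1$ and $\alpha(s)=2$ (possible since $r\ne s$), and take $j=1$, so that $\pi_1=\id$ because $L$ is reduced. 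Set $L'=\Theta_{\alpha,1}(L)$. By Proposition~\ref{prop1:1}(i), $L'$ is reduced, and by Proposition~\ref{prop1:2} (Equation~(\ref{eq6:01})) its rows are $\sigma'_i=\alpha\sigma_{\alpha^{-1}(i)}\sigma_r^{-1}\alpha^{-1}$; in particular $\sigma'_2=\alpha\sigma_{s,r}\alpha^{-1}$, which is conjugate to $\sigma_{s,r}$ and hence has at most $k$ cycles. Therefore $\nu(L')\le k$.

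Next I would apply Theorem~\ref{thm5:1} to $L'$: since $\nu(L')\le k$ it gives $|\A(L')|\le n\,\delta(L')\,\lambda(L')^k$, with $\A(L')$ computable in polynomial time in $n$. Using $\delta(L')\le n$ and the fact that in a reduced Latin square the identity first row is the unique row with cycle structure $1^n$ (two equal rows are impossible), so every cycle-structure class has at most $n-1$ rows and $\lambda(L')\le n-1$, we obtain $|\A(L')|\le n^2(n-1)^k$. Since $L$ and $L'$ are isotopic via $\Theta_{\alpha,1}$, their autotopy groups are conjugate, $\A(L)=\Theta_{\alpha,1}^{-1}\,\A(L')\,\Theta_{\alpha,1}$, so $|\A(L)|=|\A(L')|\le n^2(n-1)^k$, and $\A(L)$ is recovered from $\A(L')$ by one conjugation. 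Locating a suitable pair of rows (or columns) is itself polynomial: iterate over the $O(n^2)$ ordered pairs $(r,s)$ and test whether $\nu(\sigma_s\sigma_r^{-1})\le k$; building $L'$, running the Theorem~\ref{thm5:1} algorithm on it, and conjugating the output are all polynomial in $n$.

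I expect no genuine obstacle here: the substance is entirely inherited from Theorem~\ref{thm5:1}, and the content of the argument is just the observation that ``two rows with few relative cycles'' becomes ``one row with few cycles'' after an isotopism that is trivial to write down explicitly. The only points requiring care are routine verifications — that $\Theta_{\alpha,1}$ lands in a reduced square (Proposition~\ref{prop1:1}(i)), that the reduction does not inflate $\delta$ or $\lambda$ beyond $n$ and $n-1$, and that all the bookkeeping stays polynomial — and none of these is delicate.
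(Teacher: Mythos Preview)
Your proposal is correct and follows essentially the same route as the paper: apply an isotopism $\Theta_{\alpha,j}$ sending row $r$ to row $1$ so that in the resulting reduced square $L'$ one row is conjugate to $\sigma_{s,r}$ and hence has at most $k$ cycles, then invoke Theorem~\ref{thm5:1}. Your version is in fact slightly cleaner than the paper's (you fix $j=1$ and $\alpha(s)=2$ explicitly, and you justify $\lambda(L')\le n-1$ rather than tacitly identifying the invariants of $L$ and $L'$).
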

\begin{proof}
Let $L$ be a reduced Latin and assume $\sigma_{s,r}$ has at most $k$ cycles. Let $\alpha$ be a permutation satisfying $\alpha(1)=r$ and let $\Theta_{\alpha,j}$ be an isotopism defined by some column $\pi_j$ of $L$ as in Proposition~\ref{prop1:1}. Let $i=\alpha(s)$ and let $L'=\Theta_{\alpha,j}(L)$. By (\ref{eq6:01}), $L'$ has a row whose cycle structure has at most $k$ cycles. Since $\A(L')$ can be computed in polynomial time, so can $\A(L)$. By (\ref{eq5:1}), $|\A(L')|=|\A(L)|\le n\delta(L)\lambda(L)^k\le n^2(n-1)^k$.
\end{proof}

Let $\mathbb{P}_k(n)$ be the probability that a randomly chosen reduced Latin square has either two rows or two columns with a relative cycle structure containing at most $k$ cycles. We try to approximate $\mathbb{P}_1(n)$. It was conjectured in \cite{Cav08} that $\sigma_{s,r}$ shares the asymptotic distribution of a random derangement. This means that the probability that $\sigma_{s,r}$ is a single cycle would be approximately $e/n$. Assuming this and the weak dependence mentioned in the paragraph following Corollary~\ref{cor4:1} we obtain that $\mathbb{P}_1(n)\approx 1- (1-e/n)^{n(n-1)}$, which tends very quickly to 1. This agrees with the result of McKay and Wanless \cite{MckayWan05} who proved that the proportion of order $n$ Latin squares which have a non-trivial symmetry tends very quickly to zero. However, if the conjecture in \cite{Cav08} holds, then we have a slightly different statement, namely, that the proportion of Latin squares for which the computation of $\A(L)$ is polynomial tends very quickly to 1.

\section*{Acknowledgment}
I thank Ian Wanless for providing the programs for generating Latin squares for the experiments mentioned in Remark~\ref{rem4}. I thank an anonymous referee for a thorough reading of the manuscripts, for introducing many improvements, and for bringing the Bryant-Schneider paper to my attention.
\end{document}